\let\csname equation*\endcsname\relax
\let\csname endequation*\endcsname\relax
\newcommand{\vect}{\boldsymbol}
\newcommand{\expect}{\mathbb E}
\newcommand{\prob}{\mathbb P}
\newcommand{\elias}[2]{{\color{black}{#1}}}
\def\eye{\mathbb I}
\def\eqref#1{{\rm (\ref{#1})}}
\DeclareMathOperator{\diag}{diag}
\DeclareMathOperator{\subjto}{s.t.:}
\DeclareMathOperator*{\argmin}{arg\ min}
\newtheorem{proposition}{Proposition}
\newtheorem{lemma}{Lemma}
\begin{document}

   \title[Bregman-Risk Estimators: Application to Parameter Selection]{Unbiased Bregman-Risk Estimators: Application to Regularization Parameter Selection in Tomographic Image Reconstruction}

   \author{Elias S. Helou, Sandra A. Santos, and Lucas E. A. Simões}
   \address{\textsc{sme/icmc/usp}, Postal Box 668, 13560-970, São Carlos, SP, Brazil}
   \ead{elias@icmc.usp.br}

   \begin{abstract}
      Unbiased estimators are introduced for averaged Bregman divergences which generalize Stein's Unbiased (Predictive) Risk Estimator, and the minimization of these estimators is proposed as a regularization parameter selection method for regularization of inverse problems. Numerical experiments are presented in order to show the performance of the proposed technique. Experimental results indicate a useful occurence of a concentration of measure phenomena and some implications of this hypothesis are analyzed.
   \end{abstract}
   \vspace{2pc}
   \noindent{\it Keywords}: Bregman divergences, Regularization, Parameter selection, Tomographic image reconstruction, Concentration of measure


   \submitto{Inverse Problems}

   \section{Introduction}

   Many problems in science and engineering can be formulated as a system of nonlinear equations of the form
   \begin{equation}\label{eq:system}
      \vect A( \vect x ) \approx \vect b,
   \end{equation}
   where $\vect x \in \mathbb R^n$ is the vector of unknowns, $\vect A : \mathbb R^n \mapsto \mathbb R^m$ is the system function arising from a mathematical model for the problem, and $\vect b \in \mathbb R^m$ is the vector of observed data, which contains noise, that is, it is given by
   \begin{equation}\label{eq:model}
      \vect b = \vect A( \vect x^* ) + \vect\epsilon,
   \end{equation}
   where $\vect x^* \in \mathbb R^n$ is the exact solution and $\vect\epsilon$ is some unknown vector of random variables. The methodology we will propose can be applied to several noise models, including Poisson distributed, the sum of Gauss distributed and Poisson distributed, exponential family distributed and elliptically contoured distributed data. Example applications include tomographic image reconstruction~\cite{her80,nat86,kas88} and image denoising and deblurring~\cite{naw01,bov05}.

   \elias{}{When solving a system of linear equations, which is usually a simple task, being feasible using regular hardware and software for systems with several millions of variables, difficulties arise when the system matrix is ill-conditioned and the right-hand-side data $\vect b$ is contaminated by noise. The aforementioned tasks of tomographic image reconstruction and image denoising and deblurring are, to different degrees\footnote{Limited-angle tomography is extremely ill-conditioned, while denoising is not ill-conditioned at all.}, ill-conditioned problems for which the data often contains a noticeable amount of noise. In this case, the error contained in $\vect b$ may be severely amplified in the solution of system~\eqref{eq:system}, an effect that could reduce by much the usefulness of the resulting images.}

   Because ill-conditioned inverse problems arise so often in applications, methods for obtaining meaningful results from noisy data have been devised. These techniques are the so-called \emph{regularization methods}~\cite{ehn00} and always require a \emph{regularization parameter} to be selected by the user. In the present paper, we develop techniques for estimating certain expected errors and we apply these techniques to the problem of selecting parameters for nonlinear approaches to regularization.

   Among the first regularization methods, we have Tikhonov regularization~\cite{ehn00}, which consists \elias{of}{in} using, as an approximation to the unknown solution, the minimizer $\vect x_{\text{\tiny Tikhonov}}^\gamma$ of
   \begin{equation}\label{eq:tykhnov_classic}
         \min \| A\vect x - \vect b \|_2^2 + \gamma \| \vect x \|_2^2.
   \end{equation}
   Here, $\gamma \geq 0$ is the regularization parameter, which determines how much smoothness is imposed \elias{on}{to} the regularized solution, and we are assuming that the problem is linear, i.e., $A$ is a matrix. It is possible to verify that the Tikhonov regularized solution is given by
   \begin{equation}
      \vect x_{\text{\tiny Tikhonov}}^\gamma = (A^T A  + \gamma \eye )^{-1}A^T\vect b,
   \end{equation}
   where we write $\eye$ for the identity matrix, which will always have its dimension clear from the context. Unlike Tikhonov's, many regularization techniques are nonlinear. For example, maximum likelihood and penalized maximum likelihood methods are \elias{}{becoming more} common for tomographic image reconstruction from emission data~\cite{ahf03,hed05,hcc14,hul94,dey01,brd96}. In this case, the regularized approximation is the solution of a more general convex optimization problem, possibly of the form
   \begin{eqnarray}
      \min & \| M \vect x \|_1\nonumber \\
      \subjto  & - \log \prob( \vect b | \vect x ) \leq \ell,\quad \vect x \in \mathbb R_+^n\label{eq:minell1}
   \end{eqnarray}
   where $M$ is some sparsifying decomposition (such as a wavelet transform) and $\prob( \cdot | \vect x )$ is the probability density function of the data for a given image $\vect x$. The parameter now is $\ell > 0$, which determines our ``tolerance to unlikelihood'', within which the (hopefully) sparsest solution in the transformed space will be selected. We will study the general situation where the regularized solution is given by
   \begin{equation}\label{eq:reg}
      \vect x^\gamma := \vect B_\gamma( \vect b ),
   \end{equation}
   where $\vect B_\gamma : \mathbb R^m \mapsto \mathbb R^n$ is a function implicitly defined by the regularization method. Precise conditions on $\vect B_\gamma$ for our techniques to be applicable depend on the noise model, and will be discussed later in the text, when appropriate.

   Because confidence in the regularized solution $\vect x^\gamma$ requires careful choice of $\gamma$, several techniques to accomplish this task have been developed and studied for linear regularization, under the assumptions of uncorrelated Gaussian or deterministic noise~\cite{deg95,ghw79,han92,hat87,tbk91,bal11,bej20}. Data from imaging technologies, however, often do not fit well under the pure Gaussian hypothesis, because signal detection for imaging techniques is usually performed as a photon-counting process, thereby leading to data that can be modeled as Poisson variables~\cite{vsk85}, possibly with added Gaussian noise from underlying signal amplifying electronics~\cite{snh93}. Methods for regularization parameter selection under Poissonian, mixed Poissonian-Gaussian and more general noise hypotheses have also been devised, including those designed for nonlinear regularization~\cite{bag09,bbt10,lbu11,eld09}.

   Many parameter selection techniques rely on Unbiased Predictive Risk Estimators (UPREs) or Unbiased Risk Estimators (UREs), that is, computable unbiased estimators for
   \begin{equation}\label{eq:UPRES}
      \expect \| \vect A( \vect x^\gamma ) - \vect A( \vect x^* ) \|_2^2 \quad \text{or} \quad \expect \| \vect x^\gamma - \vect x^* \|_2^2,
   \end{equation}
   where $\expect := \expect_{\vect b}$ is the probabilistic expectation over the random variable $\vect b$, on which $\vect x^\gamma$ depends. Such U(P)REs are minimized with \elias{respect}{relation} to the regularization parameter to yield a selection rule~\cite{bbp05,deg95,gak92}. \elias{When $\vect A$ is linear, the regularization is obtained via an \emph{ordered filter} (encompassing a large class of linear regularization operators), and the noise is Gaussian, it can be shown that selecting the minimizer of certain unbiased estimators, such as the UPRE regularization parameter, relates to the URE~\cite{liw2020} as the magnitude of the error diminishes. It is unclear what is the most general setting where this kind of strong relationship between UPREs and UREs will hold.}{}
   
   Notice that the averaged quantities in~\eqref{eq:UPRES} are not directly computable and Stein's Lemma~\cite{jas61,ste81} is the tool which allows obtaining U(P)REs, originally for the i.i.d.\ Gaussian case, but later generalized for a variety of noise models~\cite{eld09,lbu11,mao14,avh06,hav08,lan08}. We will focus on predictive-type estimators, but it is possible to obtain projected UREs from these operators, at least in the linear model case~\cite{eld09}.

   \paragraph{Contributions of the paper} The main contribution of the present paper is the introduction of attainable unbiased estimators for general risk functions, which include: 
   \begin{equation}\label{eq:UBPRES}
      \expect D_f\bigl( \vect A( \vect x^* ), \vect A( \vect x^\gamma ) \bigr),
   \end{equation}
   for $f : \mathbb R^n \mapsto \mathbb R$, where $D_f$ is a Bregman divergence~\cite{bre67}:
   \begin{equation}\label{eq:bregdiv}
      D_f( \vect x, \vect y ) = f( \vect x ) - f( \vect y ) - \nabla f( \vect y )^T( \vect x - \vect y ).
   \end{equation}

   Bregman divergences are usually defined with strictly convex $f$, in which case $D_f( \vect x, \vect y ) = 0 \Leftrightarrow \vect x = \vect y$ and $D_f( \vect x, \vect y ) \ge 0$ for every pair $\vect x$ and $\vect y$. If $f$ is strictly convex but not differentiable, a subgradient~\cite{hil93} can be used instead of the gradient $\nabla f$ in order to obtain the same properties.

   Our techniques are not limited to provide estimates for $\expect D_f\bigl( \vect A( \vect x^* ), \vect A( \vect x^\gamma ) \bigr)$ when $f$ is convex. If $f$ is not \elias{strictly}{} convex, however, the result may not be as meaningful for the purpose of regularization parameter selection because it might happen that a minimizer of $D_f( \vect x, \vect y )$ occurs when $\vect x \neq \vect y$, in which case the Bregman divergence fails to behave as a measure of separation between vectors. We discuss computationally effective implementations of the estimators and its application to parameter selection in tomographic image reconstruction. \elias{We also present numerical experiments in order to show the effectiveness of the methodology when applied to Total-Variation regularized reconstruction in computed tomography.}{}
   
   It is worth noticing that although there are generalizations of Stein's Lemma to several noise models, all such results have been, to the best of our knowledge, applied to the approximate minimization of the averaged squared error of~\eqref{eq:UPRES}\elias{, with the exception of \cite{mab21}, where the averaged Kullback-Leibler (KL) divergence is considered, for the case where the parameter to be selected is the iteration number of a likelihood maximizing algorithm. Although the work of Massa and Bevenuto~\cite{mab21} was unknown to us during the preparation of the present manuscript, our results relate to those in~\cite{mab21} in an interesting manner, as we will discuss later}{}.
   
   A generalization \elias{of the unbiased quadratic or KL predictive estimators for quantities}{} like~\eqref{eq:UBPRES} with~\eqref{eq:bregdiv} enables the use of different divergences~\cite{csi91}, which may be more appropriate to the problem at hand. Our research is motivated by the successful use of \elias{several}{} Bregman divergences in many contexts~\cite{cds01,stg10,bmd05,azw01,gbg80}. \elias{We, therefore, introduce a technique that enables the use of many different Bregman-based risk measures as of the form~\eqref{eq:UBPRES}. In the numerical experiments, besides the classic mean-squared error, we have used Itakura-Saito and Kullback-Leibler related divergences as well.}{We present numerical experiments in order to show the effectiveness of the methodology. In the numerical experiments, besides the classic mean-squared error, we have used Itakura-Saito and Kullback-Leibler related divergences as well.}

   Finally, we discuss the reasons behind the behaviour of the method under the viewpoint of the concentration of measure phenomenon. Although we do not prove any concentration inequality in this work, we do analyze what would be the consequences of such concentration phenomena to our method if they actually hold true, \elias{and present some preliminary}{as the} numerical experimentation \elias{in order to illustrate the ideas}{seems to indicate}.

   \section{Stein-Like Estimates}\label{sec:estimate}

   We will now rely on unbiased estimators for quantities of the form
   \begin{equation}\label{eq:defg_gamma}
      \expect\left[ \vect h( \vect b )^T\vect\beta \right],
   \end{equation}
   where $\vect h : \mathbb R^m \mapsto \mathbb R^m$ and $\expect \vect b = \vect \beta$. For that, \elias{the}{}  knowledge of the probabilistic laws for $\vect b$ is required. For example, manyfold application of Stein's Lemma~\cite[Lemma 2]{ste81} leads to the following, where $\vect b \sim \mathcal N( \vect\beta, \sigma^2\eye )$ denotes a vector $\vect b$ of independent random variables such that each of its components $b_i$ is normally distributed with mean $\beta_i$ and variance $\sigma^2$:
   \begin{lemma}\label{lemm:stein}
      Let $\vect\beta \in \mathbb R^m$ and $\vect b \sim \mathcal N( \vect\beta, \sigma^2\eye )$ and consider $\vect h : \mathbb R^m \mapsto \mathbb R^m$ such that $\vect h$ is weakly differentiable and, for $i \in \{1, 2, \dots, m\}$, $\expect \left | \frac{\partial h_i}{\partial b_i}( \vect b ) \right| < \infty$. Then
      \begin{equation}
         \expect\left[ \vect h( \vect b )^T( \vect b - \vect\beta ) \right] = \sigma^2\expect\left[ \sum_{i = 1}^m \frac{\partial h_i}{\partial b_i}( \vect b ) \right].
      \end{equation}
   \end{lemma}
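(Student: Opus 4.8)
The plan is to reduce the multivariate identity to $m$ applications of the one-dimensional Stein identity, exactly as the phrase ``manyfold application of Stein's Lemma'' suggests. First I would expand the inner product by linearity of the expectation,
\begin{equation}
   \expect\left[ \vect h( \vect b )^T( \vect b - \vect\beta ) \right] = \sum_{i=1}^m \expect\left[ h_i( \vect b )( b_i - \beta_i ) \right],
\end{equation}
so that it suffices to establish $\expect\left[ h_i( \vect b )( b_i - \beta_i ) \right] = \sigma^2 \expect\left[ \frac{\partial h_i}{\partial b_i}( \vect b ) \right]$ for each fixed $i$; summing these $m$ identities then yields the claim, the finiteness of each term (hence of the sum) being guaranteed by the hypothesis $\expect \left| \frac{\partial h_i}{\partial b_i}( \vect b ) \right| < \infty$.

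Next, fix $i$ and exploit independence of the components of $\vect b$. Writing $\vect b_{-i}$ for the vector of all components of $\vect b$ other than $b_i$, the conditional law of $b_i$ given $\vect b_{-i}$ is $\mathcal N( \beta_i, \sigma^2 )$ and does not depend on the value of $\vect b_{-i}$. Holding $\vect b_{-i}$ fixed, the map $t \mapsto h_i( b_1, \dots, b_{i-1}, t, b_{i+1}, \dots, b_m )$ is a scalar, weakly differentiable function of the single scalar Gaussian variable $b_i$, so the one-dimensional Stein lemma~\cite[Lemma 2]{ste81} applies conditionally and gives
\begin{equation}
   \expect\left[ h_i( \vect b )( b_i - \beta_i ) \mid \vect b_{-i} \right] = \sigma^2\, \expect\left[ \frac{\partial h_i}{\partial b_i}( \vect b ) \mid \vect b_{-i} \right].
\end{equation}
Taking expectations of both sides over $\vect b_{-i}$ and invoking the tower property recovers the per-coordinate identity of the previous paragraph.

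It remains to recall why the one-dimensional lemma holds; this is an integration-by-parts statement against the Gaussian density, using $\frac{d}{dt}\, e^{-(t-\beta_i)^2/(2\sigma^2)} = -\frac{t-\beta_i}{\sigma^2}\, e^{-(t-\beta_i)^2/(2\sigma^2)}$. I expect the only genuinely delicate points to be (i) that $h_i$ is merely \emph{weakly} differentiable, so the integration by parts must be performed with the weak derivative, which is licit by absolute continuity of $t \mapsto h_i(\dots, t, \dots)$ on compact intervals; and (ii) that the boundary terms at $t \to \pm\infty$ vanish, which follows from $\expect \left| \frac{\partial h_i}{\partial b_i}( \vect b ) \right| < \infty$ together with the super-exponential decay of the Gaussian density. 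A clean way to dispatch both issues at once is to first prove the identity for $h_i$ that is smooth with compactly supported partial derivative, and then pass to the general case by mollification plus a dominated-convergence argument, with the dominating function again supplied by the integrability hypothesis. Finally, Fubini's theorem justifies all the interchanges of integration --- over $b_i$ and over $\vect b_{-i}$ --- used along the way.
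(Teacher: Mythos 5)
Your proposal is correct and follows exactly the route the paper intends: the paper offers no separate proof, stating only that the lemma follows by ``manyfold application'' of the one-dimensional Stein identity \cite[Lemma 2]{ste81}, which is precisely your coordinate-wise decomposition plus conditioning on $\vect b_{-i}$. Your additional sketch of the one-dimensional integration-by-parts argument and the handling of weak differentiability is sound but goes beyond what the paper records.
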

   We will now apply this result to the nonlinear \elias{cases}{case}~\eqref{eq:model} and \eqref{eq:reg} with expected Bregman divergence $D_f$ as a risk measure. First we rewrite:
   \begin{eqnarray}
      D_f\bigl( \vect A( \vect x^* ), \vect A( \vect x^\gamma ) \bigr) &{}= f\bigl( \vect A( \vect x^* ) \bigr) - f\bigl( \vect A( \vect x^\gamma ) \bigr) - \nabla f\bigl( \vect A( \vect x^\gamma ) \bigr)^T\bigl( \vect A( \vect x^* ) - \vect A( \vect x^\gamma ) \bigr)\nonumber\\
      & {}= f\bigl( \vect A( \vect x^* ) \bigr) - f\bigl( \vect A( \vect x^\gamma ) \bigr) - \nabla f\bigl( \vect A( \vect x^\gamma ) \bigr)^T\bigl( \vect b - \vect A( \vect x^\gamma ) \bigr)\nonumber\\
      &\qquad\qquad\qquad\qquad\qquad\qquad {}+ \nabla f\bigl( \vect A( \vect x^\gamma ) \bigr)^T\bigl( \vect b - \vect A( \vect x^* ) \bigr)\nonumber\\
      & {}= f\bigl( \vect A( \vect x^* ) \bigr) - f( \vect b ) + D_f\bigl( \vect b, \vect A( \vect x^\gamma ) \bigr)\nonumber\\
      &\qquad\qquad\qquad\qquad\qquad\qquad {}+ \nabla f\bigl( \vect A( \vect x^\gamma ) \bigr)^T\bigl( \vect b - \vect A( \vect x^* ) \bigr).\label{eq:bregmanRewrite}
   \end{eqnarray}
   Then we can prove the following \elias{result.}{}
   \begin{proposition}\label{coro:gaussEstim}
      Suppose $\vect b \sim \mathcal N\bigl( \vect A( \vect x^* ), \sigma^2 \eye \bigr)$ and let $f : \mathbb R^n \to \mathbb R$, $\vect A : \mathbb R^n \mapsto \mathbb R^m$, $\vect B_\gamma : \mathbb R^m \mapsto \mathbb R^n$, $\vect x^* \in \mathbb R^n$ be given. Define $\vect x^\gamma := \vect B_\gamma( \vect b )$ and denote
      \begin{equation}\label{eq:compositeG}
         {\vect g}_\gamma := \nabla f \circ \vect A \circ \vect B_\gamma.
      \end{equation}
      Assume that $f$, $\vect A$ and $\vect B_\gamma$ are such that ${\vect g}_\gamma$ as defined in~\eqref{eq:compositeG} is weakly differentiable, $\expect f( \vect b ) < \infty$, $\expect D_f\bigl( \vect b, \vect A( \vect x^\gamma ) \bigr) < \infty$ and that for $i \in \{ 1, 2, \dots, m \}$, $\expect_{b_i} \left | \frac{\partial g_i}{\partial b_i}( \vect b ) \right| < \infty$. Then we have:
      \begin{equation}\label{eq:g-ureExact}
         \expect D_f\bigl( \vect A( \vect x^* ), \vect A( \vect x^\gamma ) \bigr) = K + \expect D_f\bigl( \vect b, \vect A( \vect x^\gamma ) \bigr) + \sigma^2\expect\left[ \sum_{i = 1}^m \frac{\partial g_i}{\partial b_i}( \vect b ) \right],
      \end{equation}
      where $K$ is a constant independent of $\gamma$.
   \end{proposition}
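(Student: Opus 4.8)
The plan is to integrate the pointwise identity~\eqref{eq:bregmanRewrite} against the law of $\vect b$ and to recognize its last term as an expectation of the type~\eqref{eq:defg_gamma}, to which Stein's Lemma applies.

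First I would note that~\eqref{eq:bregmanRewrite} is a purely algebraic rearrangement of the definition~\eqref{eq:bregdiv} of $D_f$ and hence holds for every realization of $\vect b$. Since $\vect x^\gamma = \vect B_\gamma( \vect b )$, we have $\nabla f\bigl( \vect A( \vect x^\gamma ) \bigr) = ( \nabla f \circ \vect A \circ \vect B_\gamma )( \vect b ) = \vect g_\gamma( \vect b )$, so the last term of~\eqref{eq:bregmanRewrite} reads $\vect g_\gamma( \vect b )^T\bigl( \vect b - \vect A( \vect x^* ) \bigr)$. Taking $\expect$ on both sides, using that $f\bigl( \vect A( \vect x^* ) \bigr)$ is a deterministic number, that $\expect f( \vect b ) < \infty$ and $\expect D_f\bigl( \vect b, \vect A( \vect x^\gamma ) \bigr) < \infty$ by hypothesis, and that the cross term is integrable by the conclusion of Stein's Lemma invoked below, linearity of the expectation gives
\begin{equation}
   \expect D_f\bigl( \vect A( \vect x^* ), \vect A( \vect x^\gamma ) \bigr) = \underbrace{f\bigl( \vect A( \vect x^* ) \bigr) - \expect f( \vect b )}_{=:\,K} + \expect D_f\bigl( \vect b, \vect A( \vect x^\gamma ) \bigr) + \expect\left[ \vect g_\gamma( \vect b )^T\bigl( \vect b - \vect A( \vect x^* ) \bigr) \right].
\end{equation}
Here $K$ depends only on $f$, $\vect A$, $\vect x^*$ and on the law $\mathcal N\bigl( \vect A( \vect x^* ), \sigma^2\eye \bigr)$, none of which involves $\gamma$, so it is the claimed $\gamma$-independent constant.

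It then remains to evaluate the cross term. Under the noise model $\vect A( \vect x^* ) = \expect\vect b$, so $\expect\left[ \vect g_\gamma( \vect b )^T( \vect b - \vect A( \vect x^* ) ) \right] = \expect\left[ \vect g_\gamma( \vect b )^T( \vect b - \expect\vect b ) \right]$, which is exactly of the form~\eqref{eq:defg_gamma} with $\vect h = \vect g_\gamma$ and $\vect\beta = \vect A( \vect x^* )$. I would apply Stein's Lemma (Lemma~\ref{lemm:stein}) to $\vect h := \vect g_\gamma$: it is weakly differentiable by assumption, and the assumed finiteness of $\expect_{b_i}\bigl| \tfrac{\partial g_i}{\partial b_i}( \vect b ) \bigr|$ provides, coordinate by coordinate and after integrating over the remaining coordinates of the product Gaussian, the integrability hypothesis of the lemma. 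This yields $\expect\left[ \vect g_\gamma( \vect b )^T( \vect b - \expect\vect b ) \right] = \sigma^2\expect\left[ \sum_{i=1}^m \tfrac{\partial g_i}{\partial b_i}( \vect b ) \right]$; substituting into the displayed identity gives~\eqref{eq:g-ureExact}.

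The algebraic content is thus entirely contained in~\eqref{eq:bregmanRewrite}, and the rest is bookkeeping. I expect the main (though mild) obstacle to be the measure-theoretic justification: confirming that every term separated by linearity of expectation is finite so that no $\infty - \infty$ indeterminacy occurs, matching precisely the per-coordinate integrability hypothesis of the Proposition with the hypothesis required to invoke Lemma~\ref{lemm:stein} (applying the one-dimensional Stein identity in each $b_i$ direction and then integrating and summing), and verifying that weak differentiability indeed propagates through the composition $\nabla f \circ \vect A \circ \vect B_\gamma$ as the statement assumes.
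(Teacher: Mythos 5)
Your proposal is correct and follows exactly the paper's own argument: take expectations across the identity~\eqref{eq:bregmanRewrite}, set $K := f\bigl( \vect A( \vect x^* ) \bigr) - \expect f( \vect b )$, identify the cross term as $\expect\bigl[ {\vect g}_\gamma( \vect b )^T( \vect b - \vect A( \vect x^* ) ) \bigr]$, and replace it via Lemma~\ref{lemm:stein}. The additional measure-theoretic bookkeeping you flag is sensible but does not change the route; the paper's proof is the same in substance.
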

   \begin{proof}
      Let $K := f\bigl( \vect A( \vect x^* ) \bigr) - \expect f( \vect b )$, then computing expectations on both sides of~\eqref{eq:bregmanRewrite} we have
      \begin{eqnarray*}
         \expect D_f\bigl( \vect A( \vect x^* ), \vect A( \vect x^\gamma ) \bigr) &{} = K + \expect D_f\bigl( \vect b, \vect A( \vect x^\gamma ) \bigr) + \expect\left[ \nabla f\bigl( \vect A( \vect x^\gamma ) \bigr)^T\bigl( \vect b - \vect A( \vect x^* ) \bigr) \right]\\
         &{} = K + \expect D_f\bigl( \vect b, \vect A( \vect x^\gamma ) \bigr) + \expect\left[ {\vect g}_\gamma( \vect b )^T\bigl( \vect b - \vect A( \vect x^* ) \bigr) \right].
      \end{eqnarray*}
      Using Lemma~\ref{lemm:stein} to replace the last term on the right gives the desired result.
   \end{proof}

   The above \elias{proposition}{Proposition} shows that, from the viewpoint of obtaining an estimator to the average risk for nonlinear models and/or reconstruction techniques, applying Stein's Lemma to a more general risk function has the same difficulty of applying it to the Mean Squared Error (MSE). Furthermore, there seems to exist compelling reasons to use other risk measures~\cite{csi91}. Indeed, a varied set of Bregman divergences have been successfully used in several applications, such as principal component analysis~\cite{cds01}; on-line density estimation~\cite{azw01}; machine learning~\cite{stg10,bmd05}, and speech processing~\cite{gbg80}.

   Let us now examine the Poisson case. Suppose that $b$ is Poisson distributed with mean $\beta$. We denote this as $b \sim \mathcal P( \beta )$. Also, if $\vect b$ is a vector of random variables such that $b_i \sim \mathcal P( \beta_i )$, we simplify the notation by $\vect b \sim \mathcal P( \vect \beta )$. Now, let $b \sim \mathcal P( \beta )$ and $h : \mathbb R \mapsto \mathbb R$ be such that $\expect_b[ h( b ) ] < \infty$, then we have~\cite{pen75}:
   \begin{equation}
      \expect_b [ \beta h( b ) ]= \expect_b [ b h( b - 1 ) ].
   \end{equation}
   This equation can be used to prove the following result~\cite[Property~2]{lbu11}:
   \begin{lemma}\label{lemm:robbin}
      Let $\vect\beta \in \mathbb R^m_+$, $\vect b \sim \mathcal P( \vect\beta )$ and consider $\vect h : \mathbb R^m \mapsto \mathbb R^m$ such that for $i \in \{1, 2, \dots, m\}$, $\expect_{b_i} [ h_i( \vect b ) ] < \infty$ and $\expect\left[ \vect h( \vect b )^T\vect b \right] < \infty$. Then
      \begin{equation}
         \expect\left[ \vect h( \vect b )^T( \vect b - \vect\beta ) \right] = \expect\left[ \vect b^T\bigl( \vect h( \vect b ) - \vect h^{[-1]}( \vect b ) \bigr) \right],
      \end{equation}
      with $\vect h^{[ \xi ]}$, for $\xi \in \mathbb R$, given componentwise as
      \begin{equation}\label{eq:def[-]}
         h^{[ \xi ]}_i( \vect b ) := h_i( \vect b + \xi\vect e^i  ),
      \end{equation}
      where $\vect e^i$ denotes the $i$-th column of the $m \times m$ identity matrix.
   \end{lemma}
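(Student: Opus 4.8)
The plan is to reduce the vector identity to the scalar Poisson identity $\expect_b[\beta h(b)] = \expect_b[b\, h(b-1)]$ stated just above, applied one coordinate at a time and exploiting the independence of the components of $\vect b$. The only term that really needs work is $\expect[\beta_i h_i(\vect b)]$; everything else is linearity of expectation plus careful tracking of the integrability conditions.

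First I would expand the inner product componentwise and split each term,
\[
   \expect\left[ \vect h(\vect b)^T(\vect b - \vect\beta) \right] = \sum_{i=1}^m \Bigl( \expect\bigl[ b_i h_i(\vect b) \bigr] - \expect\bigl[ \beta_i h_i(\vect b) \bigr] \Bigr),
\]
which is legitimate because the hypotheses $\expect\left[ \vect h(\vect b)^T\vect b \right] < \infty$ and $\expect_{b_i}[h_i(\vect b)] < \infty$, together with $\beta_i \ge 0$ and $m < \infty$, guarantee absolute convergence of the finitely many expectations in play (this is also what will license the recombination at the end). Next I would handle $\expect\bigl[ \beta_i h_i(\vect b) \bigr]$ by first integrating over $b_i$ alone with $\vect b_{-i} := (b_j)_{j \neq i}$ held fixed: since the $b_j$ are independent, $b_i \sim \mathcal P(\beta_i)$ under this partial expectation, so the univariate map $t \mapsto h_i(b_1,\dots,t,\dots,b_m)$ plays the role of $h$ in the scalar identity and
\[
   \expect_{b_i}\bigl[ \beta_i h_i(\vect b) \bigr] = \expect_{b_i}\bigl[ b_i\, h_i(\vect b - \vect e^i) \bigr] = \expect_{b_i}\bigl[ b_i\, h_i^{[-1]}(\vect b) \bigr].
\]
Taking the remaining expectation over $\vect b_{-i}$ and using $\expect[\,\cdot\,] = \expect_{\vect b_{-i}}\bigl[ \expect_{b_i}[\,\cdot\,] \bigr]$ gives $\expect\bigl[ \beta_i h_i(\vect b) \bigr] = \expect\bigl[ b_i\, h_i^{[-1]}(\vect b) \bigr]$. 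Substituting this into the split sum yields
\[
   \expect\left[ \vect h(\vect b)^T(\vect b - \vect\beta) \right] = \sum_{i=1}^m \expect\bigl[ b_i\bigl( h_i(\vect b) - h_i^{[-1]}(\vect b) \bigr) \bigr] = \expect\left[ \vect b^T\bigl( \vect h(\vect b) - \vect h^{[-1]}(\vect b) \bigr) \right],
\]
which is the assertion.

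I expect the main obstacle to be bookkeeping with the integrability hypotheses rather than anything conceptual. One must check that the scalar identity genuinely applies under the partial expectation $\expect_{b_i}$ --- that is, that $\expect_{b_i}[h_i(\vect b)] < \infty$ really delivers $\expect_{b_i}\lvert h_i(\vect b)\rvert < \infty$ (splitting $h_i$ into positive and negative parts) --- and that splitting the inner product, applying the tower/Fubini property, and recombining the sum are all justified by \emph{absolute} rather than merely conditional convergence. One should also record the harmless boundary case $b_i = 0$, where $h_i^{[-1]}(\vect b) = h_i(\vect b - \vect e^i)$ is evaluated at a point with a negative $i$-th coordinate: that term is multiplied by $b_i = 0$ and contributes nothing, exactly as the $b = 0$ term of $\expect_b[b\, h(b-1)]$ vanishes in the scalar identity.
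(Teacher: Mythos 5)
Your proof is correct and follows exactly the route the paper indicates: the paper omits the proof, citing it as Property~2 of the Luisier--Blu--Unser reference, but explicitly states that the scalar Poisson identity $\expect_b[\beta h(b)] = \expect_b[b\,h(b-1)]$ ``can be used to prove'' the lemma, and your componentwise application of that identity under the partial expectation $\expect_{b_i}$ (using independence of the coordinates) is precisely that argument. The integrability bookkeeping and the remark about the harmless $b_i = 0$ boundary term are handled appropriately, so there is nothing to add.
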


   Now using Lemma~\ref{lemm:robbin} instead of Lemma~\ref{lemm:stein}, we have the following result, the proof of which we omit for similarity with the proof of Proposition~\ref{coro:gaussEstim}.
   \begin{proposition}\label{coro:poissonEstim}
      Suppose $\vect b \sim \mathcal P\bigl( \vect A( \vect x^* ) \bigr)$ and let $f$, $\vect A$, $\vect x^*$, $\vect x^\gamma$, and ${\vect g}_\gamma$ be as in Proposition~\ref{coro:gaussEstim}. Assume ${\vect g}_\gamma^{[ -1 ]}$ follows the notation of~\eqref{eq:def[-]}. Further assume that $f$, $\vect A$ and $\vect B_\gamma$ are such that $\expect f( \vect b ) < \infty$, $\expect D_f\bigl( \vect b, \vect A( \vect x^\gamma ) \bigr) < \infty$, $\expect\left[ {\vect g}_\gamma( \vect b )^T\vect b \right] < \infty$ and that for $i \in \{1, 2, \dots, m\}$, $\expect_{b_i} [ g_i( \vect b ) ] < \infty$. Then we have
      \begin{equation}\label{eq:PupbreExact}
         \expect D_f\bigl( \vect A( \vect x^* ), \vect A( \vect x^\gamma ) \bigr) = K + \expect D_f\bigl( \vect b, \vect A( \vect x^\gamma ) \bigr) + \expect\left[ \vect b^T\bigl( {\vect g}_\gamma( \vect b ) - {\vect g}_\gamma^{[-1]}( \vect b ) \bigr) \right],
      \end{equation}
      where $K$ is a constant independent of $\gamma$.
   \end{proposition}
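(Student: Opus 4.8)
The plan is to follow the proof of Proposition~\ref{coro:gaussEstim} essentially verbatim, the only change being that the single application of Lemma~\ref{lemm:stein} is replaced by one of Lemma~\ref{lemm:robbin}. What makes this possible is that the rewriting~\eqref{eq:bregmanRewrite} is purely algebraic — it amounts to adding and subtracting $\nabla f\bigl( \vect A( \vect x^\gamma ) \bigr)^T\vect b$ — so it is valid whatever the law of $\vect b$ may be, in particular when $\vect b \sim \mathcal P\bigl( \vect A( \vect x^* ) \bigr)$.

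Concretely, I would first set $K := f\bigl( \vect A( \vect x^* ) \bigr) - \expect f( \vect b )$; this is finite since $\expect f( \vect b ) < \infty$ by hypothesis, and independent of $\gamma$ because $\vect x^*$ is fixed and the distribution of $\vect b$ does not involve $\gamma$. Taking expectations on both sides of~\eqref{eq:bregmanRewrite} then gives
\[
   \expect D_f\bigl( \vect A( \vect x^* ), \vect A( \vect x^\gamma ) \bigr) = K + \expect D_f\bigl( \vect b, \vect A( \vect x^\gamma ) \bigr) + \expect\left[ \nabla f\bigl( \vect A( \vect x^\gamma ) \bigr)^T\bigl( \vect b - \vect A( \vect x^* ) \bigr) \right].
\]
Next, using $\vect x^\gamma = \vect B_\gamma( \vect b )$ and the definition~\eqref{eq:compositeG}, I would rewrite $\nabla f\bigl( \vect A( \vect x^\gamma ) \bigr) = {\vect g}_\gamma( \vect b )$, and observe that under the Poisson model $\expect\vect b = \vect A( \vect x^* ) =: \vect\beta$, so the last term becomes $\expect\left[ {\vect g}_\gamma( \vect b )^T( \vect b - \vect\beta ) \right]$. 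The hypotheses $\expect\left[ {\vect g}_\gamma( \vect b )^T\vect b \right] < \infty$ and $\expect_{b_i}[ g_i( \vect b ) ] < \infty$ are precisely those required to apply Lemma~\ref{lemm:robbin} with $\vect h = {\vect g}_\gamma$, which replaces this term by $\expect\left[ \vect b^T\bigl( {\vect g}_\gamma( \vect b ) - {\vect g}_\gamma^{[-1]}( \vect b ) \bigr) \right]$. Substituting back produces~\eqref{eq:PupbreExact}.

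Everything else being elementary, the only point that genuinely demands care — and hence the closest thing to an obstacle — is the integrability bookkeeping: one must check that each of the three summands on the right of~\eqref{eq:bregmanRewrite} is integrable, so that expectation may be distributed over the sum, and that the cross term splits as ${\vect g}_\gamma( \vect b )^T\vect b - \vect\beta^T{\vect g}_\gamma( \vect b )$ with both pieces finite. All of this follows from the stated assumptions, which is exactly why they are imposed. It is also worth noting that, in contrast with the Gaussian setting, no weak differentiability of ${\vect g}_\gamma$ is needed here: the discrete shift $\vect h \mapsto \vect h^{[-1]}$ of~\eqref{eq:def[-]} plays the role that the partial derivatives play in Proposition~\ref{coro:gaussEstim}.
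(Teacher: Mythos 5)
Your proposal is correct and follows exactly the route the paper intends: the authors omit this proof precisely because it is the proof of Proposition~\ref{coro:gaussEstim} with Lemma~\ref{lemm:robbin} substituted for Lemma~\ref{lemm:stein}, which is what you carry out. Your added remarks on the integrability bookkeeping and on weak differentiability being unnecessary in the Poisson case are accurate.
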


   By now, the pattern has hopefully become evident to the reader. The idea is that given an unbiased, computable from the data, estimator for
   \begin{equation}\label{eq:estimNeccessary}
      \expect\left[ \nabla f \bigl( \vect A( \vect x^\gamma ) \bigr)^T\left( \vect b - \vect A( \vect x^* ) \right) \right],
   \end{equation}
   one can straightforwardly obtain, up to a constant, an estimator for $\expect D_f\bigl( \vect A( \vect x^* ), \vect A( \vect x^\gamma ) \bigr)$ by taking~\eqref{eq:bregmanRewrite} in consideration. Practical estimators for quantities such as~\eqref{eq:estimNeccessary} exist for a variety of noise models. For example, we can mention papers~\cite{mao14,lbu11} for the mixed Poisson-Gaussian case; \cite{eld09} for the exponential family case (which includes Gaussian, Poisson, binomial, gamma and inverse Gaussian distributions), and~\cite{hav08,lan08} for elliptically distributed errors.

   \subsection{Computation of Stein-Like Estimators}

   While equations~\eqref{eq:g-ureExact} and \eqref{eq:PupbreExact} do not rely on the unattainable quantities $\vect x^*$ or $\vect A( \vect x^* )$, both pose computational difficulties. In formula~\eqref{eq:g-ureExact} there is the need to compute the partial derivatives $\partial g_i / \partial x_i$, which depend on the derivatives of the reconstruction method. In most cases, no analytical expression for these derivatives will be available. Even for linear reconstruction methods, such as the Filtered BackProjection (FBP) algorithm, analytical expressions are not available and Monte-Carlo techniques will likely be used, as in~\cite{rbu08,mao14}. These approaches take advantage of the large number of \elias{terms}{parcels} in the summation and use random vectors to \elias{}{probe the method and} estimate the trace of the Jacobian with good accuracy and relatively low computational cost. Options such as numerically approximating each of the summands by finite differences are unfeasible as they would require the solution of a large number of related problems.

   The Monte-Carlo principle we use is given by the equality~\cite{rbu08,mao14}:
   \begin{equation}\label{eq:limit}
      \expect_{\vect\omega}\left[ \lim_{\epsilon \to 0}\frac1\epsilon\vect \omega ^T \diag( \vect z )\bigl( {\vect g}_\gamma( \vect b + \epsilon\vect\omega ) - {\vect g}_\gamma( \vect b ) \bigr)\right] = \vect z^T\vect \partial{\vect g}_\gamma( \vect b ),
   \end{equation}
   where $\vect z \in \mathbb R^m$, $\vect\omega \in \mathbb R^m$ is such that $\expect_{\vect\omega} \vect\omega = \vect 0$ and $\expect_{\vect\omega}\vect\omega\vect\omega^T = \eye$, and $\vect\partial {\vect g}_\gamma$ is defined componentwise as
   \begin{equation}
      \vect\partial_i {\vect g}_\gamma = \frac{\partial g_i}{\partial b_i}.
   \end{equation}
   Therefore, if the noise model is Gaussian, one could define the following estimator
   \begin{equation}
      \text{G-UPBRE}^f_\epsilon( \gamma ) := D_f\bigl( \vect b, \vect A( \vect x^\gamma ) \bigr) + \frac{\sigma^2}{\epsilon}\vect \omega ^T \bigl( {\vect g}_\gamma( \vect b + \epsilon\vect\omega ) - {\vect g}_\gamma( \vect b ) \bigr),
   \end{equation}
   where UPBRE stands for Unbiased Predictive Bregman Risk Estimators. Thus, because of~\eqref{eq:g-ureExact}~and~\eqref{eq:limit}, we have
   \begin{equation}\label{eq:convUPBREGauss}
      \expect_{\vect b, \vect\omega}\left[ \lim_{\epsilon \downarrow 0}\text{G-UPBRE}^f_\epsilon( \gamma ) \right] = \expect D_f\bigl( \vect A( \vect x^* ), \vect A( \vect x^\gamma ) \bigr) - K,
   \end{equation}
   where $K$ does not depend on $\gamma$.

   A discussion on the selection of the discretization parameter $\epsilon$ to be used in a practical approximation of the limit inside the expectation on the \elias{left-hand side}{left-hand-side} of~\eqref{eq:limit} can be found, for example, in~\cite{rbu08,mao14}. In both references, it has been found experimentally that under weak differentiability hypothesis, the approximation
   \begin{equation}\label{eq:diffApprox}
      \frac1\epsilon\vect \omega ^T \diag( \vect z )\bigl( {\vect g}_\gamma( \vect b + \epsilon\vect\omega ) - {\vect g}_\gamma( \vect b ) \bigr) \approx \vect z^T\vect \partial{\vect g}_\gamma( \vect b )
   \end{equation}
   appears to hold consistently within a wide range of values of $\epsilon$. Interestingly, for non-differentiable regularization methods, there still seems to be a (narrower) range for $\epsilon$ where the technique provides useful results, even without theoretical backup. In~\cite{mao14}, it was also shown that the most favorable probability distribution for $\vect\omega$, in the sense that it minimizes the variance of the resulting estimator on the left-hand side of \eqref{eq:diffApprox} for vanishing $\epsilon$, is the one for which the components $\omega_i$ of $\vect\omega$ are independently distributed with $\prob( \{ \omega_i = -1 \} ) = \prob( \{ \omega_i = 1 \} ) = 1/2$.

   Now assume a Poissonian noise model. With $\vect\omega$ as before, let us then define
   \begin{equation}\label{eq:P-UPBRE}
      \text{P-UPBRE}^f_\epsilon( \gamma ) := D_f\bigl( \vect b, \vect A( \vect x^\gamma ) \bigr) + \frac1\epsilon\vect \omega ^T \diag( \vect b )\bigl( {\vect g}_\gamma( \vect b + \epsilon\vect\omega ) - {\vect g}_\gamma( \vect b ) \bigr).
   \end{equation}
   Therefore, application of~~\eqref{eq:limit} leads to
   \begin{equation}\label{eq:convUPBREPoisson}
      \expect_{\vect\omega}\left[ \lim_{\epsilon \downarrow 0}\text{P-UPBRE}^f_\epsilon( \gamma ) \right] = D_f\bigl( \vect b, \vect A( \vect x^\gamma ) \bigr) + \vect b^T\vect\partial{\vect g}_\gamma( \vect b ).
   \end{equation}
   Furthermore, notice that a first-order Taylor expansion \elias{for the last term in~\eqref{eq:PupbreExact}}{} yields
   \begin{equation}\label{eq:TaylorApproxPoisson}
      \expect\left[ \vect b^T\bigl( {\vect g}_\gamma( \vect b ) - {\vect g}_\gamma^{[-1]}( \vect b ) \bigr) \right] \approx \expect\left[ \vect b^T\vect\partial{\vect g}_\gamma( \vect b ) \right].
   \end{equation}
   Finally, computing the expectation with \elias{respect}{relation} to $\vect b$ in both sides of~\eqref{eq:convUPBREPoisson}, taking~\eqref{eq:TaylorApproxPoisson} into consideration, and then using~\eqref{eq:PupbreExact}, we get:
   \begin{eqnarray}
      \expect_{\vect b, \vect\omega}\left[ \lim_{\epsilon \downarrow 0}\text{P-UPBRE}^f_\epsilon( \gamma ) \right] \approx \expect D_f\bigl( \vect A( \vect x^* ), \vect A( \vect x^\gamma ) \bigr) - K,
   \end{eqnarray}
   which is an approximate result, unlike~\eqref{eq:convUPBREGauss}. The approximation error should be relatively small since for Poisson random variables unity perturbations are likely to be small relatively to the size of the perturbed variables. Accordingly, numerical experimentation has found the approximation~\eqref{eq:TaylorApproxPoisson} to be accurate enough for practical applications~\cite{mao14}.

   It is not the purpose of the present paper to go through all the noise models possibly covered by the technique. Instead, we will focus our experimental work in the Poissonian case, which is the dominant noise type, e.g., in emission tomography. It is necessary, however, to notice that more sophisticated circumstances, such as a combination of Poissonian and Gaussian noise models, may lead to complications in the numerical computation of unbiased Stein-like estimators other than the simple 
   \elias{first-derivative}{first derivative}  trace estimation. On the other hand, this issue has already been previously addressed within reasonable detail in the literature~\cite{mao14} and should not be too much of a concern to the practitioner.

   \section{Numerical Experimentation}

   \subsection{The Radon Transform}

   Tomography is the production of cross-sectional images of objects in a minimally invasive manner. Several techniques have been devised in order to achieve this goal, many of which are modeled via the so-called Radon Transform (RT). The RT of a function $\eta : \mathbb R^2 \to \mathbb R$, denoted as $\mathcal R[ \eta ]$ is defined as:
   \begin{equation}
      \mathcal R[ \eta ]( \theta, t ) := \int_{\mathbb R} \eta\left( t\left(\begin{smallmatrix}\cos\theta\\ \sin\theta\end{smallmatrix}\right) + s \left(\begin{smallmatrix}-\sin\theta\\ \cos\theta\end{smallmatrix}\right)\right) \mathrm ds.
   \end{equation}
   \elias{}{
   Figure~\ref{fig:rad} (adapted from~\cite{hcc14}) depicts the geometry of this definition.

   \begin{figure}
      \centering%
      \newcommand{\shepplogan}
{%
   \begin{scope}[line width=0pt]
      \path[color=black!100,draw,fill] (0,0)       ellipse (0.69 and 0.92);
      \path[color=black!20,draw,fill]  (0,-0.0184) ellipse (0.6624 and 0.874);

      \path[color=black!30,draw,fill] (0,0.35)       ellipse (0.21 and 0.25);
      \path[color=black!30,draw,fill] (0,-0.1)       ellipse (0.046 and 0.046);
      \path[color=black!30,draw,fill] (-0.08,-0.605) ellipse (0.046 and 0.023);
      \path[color=black!30,draw,fill] (0,-0.606)     ellipse (0.023 and 0.023);
      \path[color=black!30,draw,fill] (0.06,-0.605)  ellipse (0.023 and 0.046);
      \path[color=black!30,draw,fill] (0.06,-0.605)  ellipse (0.023 and 0.046);

      \path[color=black!0,draw,fill,xshift=0.22\grfxunit,rotate=-18] (0,0) ellipse (0.11 and 0.31);
      \path[color=black!0,draw,fill,xshift=-0.22\grfxunit,rotate=18] (0,0) ellipse (0.16 and 0.41);
      \begin{scope}
         \path[clip,xshift=-0.22\grfxunit,rotate=18] (0,0)    ellipse (0.16 and 0.41);
         \path[color=black!10,draw,fill]             (0,0.35) ellipse (0.21 and 0.25);
         \path[color=black!10,draw,fill]             (0,-0.1) ellipse (0.046 and 0.046);
      \end{scope}
   \end{scope}

   \path[color=black!30,draw,fill]    (0,0.1)  ellipse (0.046 and 0.046);
   \begin{scope}
      \path[clip]                     (0,0.1)  ellipse (0.046 and 0.046);
      \path[color=black!40,draw,fill] (0,0.35) ellipse (0.21 and 0.25);
   \end{scope}
}%
      \input{figradonteta.tex}%
      \def\tlinha{-0.6}%
      \setlength{\grftotalwidth}{0.45\columnwidth}%
      \setlength{\grfticksize}{0.5\grfticksize}%
      \small{\ }\hfill%
      \begin{grfgraphic}{%
         \def\grfxmin{-2.05}\def\grfxmax{1.5}%
         \def\grfymin{-1.5}\def\grfymax{2.05}%
      }%
         \begin{scope}[>=stealth,style=grfaxisstyle,<->]%
            \shepplogan%
            \draw (-1.5,0) -- (1.5,0);%
            \draw (0,-1.5) -- (0,1.5);%
            \foreach \i in {-1,1}%
            {
               \draw[style=grftickstyle,-] (\i\grfxunit,-\grfticksize) -- (\i\grfxunit,\grfticksize);%
               \draw[style=grftickstyle,-] (-\grfticksize,\i\grfyunit) -- (\grfticksize,\i\grfyunit);%
            }
            \begin{scope}[rotate=\teta]
               \draw (-1.5,0) -- (1.5,0);
               \foreach \i in {-1,1}
                  \draw[style=grftickstyle,-] (\i\grfxunit,-\grfticksize) -- (\i\grfxunit,\grfticksize);
               \draw[dashed,dash phase=-0.005\grfyunit] (\tlinha,-1.5) -- (\tlinha,1.5);
               \fill (\tlinha,0) node[anchor=north,inner sep=\grflabelsep] {\scriptsize$t$} circle (0.025cm);
               \draw[-] (\tlinha\grfxunit,0.3em) -| (\tlinha\grfxunit - 0.3em,0pt);
               \fill (\tlinha\grfxunit - 0.15em,0.15em) circle (0.025cm);
            \end{scope}
            \begin{scope}[rotate=\teta,yshift=1.5\grfyunit]
               \draw[line width=0.025cm] plot file {figradon.data};
               \draw[-]  (1.5,0) -- (-1.5,0) node[anchor=north,rotate=\teta,inner sep=\grflabelsep] {\scriptsize$t$};
               \draw[->] (0,0)    -- (0,0.7) node[anchor=west,rotate=\teta,inner sep=\grflabelsep]  {\scriptsize$\mathcal R[ \eta ](\theta,t)$};
            \end{scope}
            \def\rad{0.075}
            \FPupn{\cpt}{0.552285 \rad{} * 90 \teta{} / *}
            \path (\teta:\rad) ++(\teta - 90:\cpt) node (a) {};
            \draw[-] (0,0) -- (\rad,0) .. controls +(0,\cpt) and (a) .. (\teta:\rad) -- cycle;
            \draw[style=grftickstyle,-,rotate=\tetameio,xshift=\rad\grfxunit] (-\grfticksize,0pt) -- (\grfticksize,0pt);
            \path[rotate=\tetameio,xshift=0.3em] (\rad,0) node[anchor=west,inner sep=0pt] {\scriptsize$\theta$};
         \end{scope}
      \end{grfgraphic}\hfill%
      \begin{grfgraphic}[1.125]{%
         \grfyaxis[R]{[]-1;0;1[]}{[]\tiny$-1$;\tiny$0$;\tiny$1$[]}%
         \grfylabel{\footnotesize$t$}%
         \grfxaxis[R]{[]0;1.57;3.14[]}{[]\tiny$0$;\tiny$\frac\pi2$;\tiny$\pi$[]}%
         \grfxlabel{\footnotesize$\theta$}%
         \def\grfxmin{0}%
         \def\grfxmax{3.14}%
         \grfwindow%
      }%
         \node[anchor=north west,inner sep=0pt] at (0,1)
         {\includegraphics[width=3.14\grfxunit,height=2\grfyunit]
         {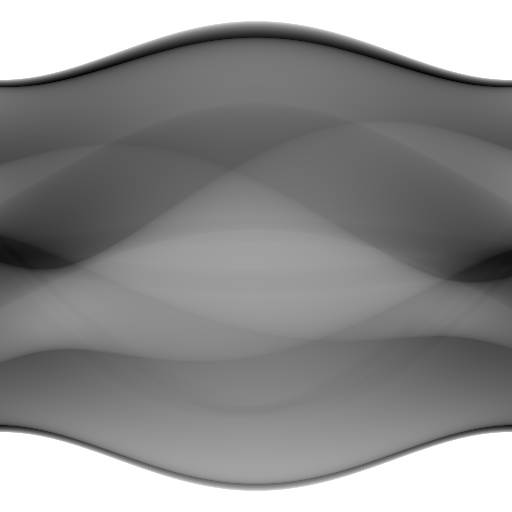}};%
      \end{grfgraphic}\hfill{\ }%
      \caption{Left: geometry of of the Radon transform. Right: Radon transform of the image shown on the
      left in the $\theta \times t$ coordinate system.}\label{fig:rad}%
   \end{figure}}

   A well known example of tomographic reconstruction technique that can be modeled with the help of the Radon transform is X-ray Computed Tomography (XCT). Experiments involving XCT are presented below. Both synthetic and real world datasets are used.

   \subsection{XCT}

   \elias{}{
   XCT is an example of \emph{transmission tomography} because the signal used to infer the RT of the imaged object is transmitted through this object of interest. The reasoning behind this process depends on the Beer-Lambert law, which states that
   \begin{equation}
      I_{\text{detected}} = I_{\text{emitted}} e^{-\int_L\mu \mathrm ds},
   \end{equation}
   where $I_{\text{detected}}$ is the expected number of photons arriving at the detector, $I_{\text{emitted}}$ is the expected number of photons departing from the source, $\int_L\mu \mathrm ds$ is the integral with respect to the arc length of the attenuation coefficient $\mu : \mathbb R^2 \to \mathbb R$ along the line $L$ connecting the source to the detector.}

   Our transmission datasets were collected at the Brazilian Synchrotron Light Laboratory (LNLS). In this kind of setup, three measurements are made for each path $L$:
   \begin{itemize}
      \item $I_{\text{dark}}( L )$: expected number of photons detected with the source turned off;
      \item $I_{\text{flat}}( L )$: expected number of photons detected with the source turned on but without object between source and detector;
      \item $N_{\text{count}}( L )$: number of photons detected with the source turned on and with the object between source and detector.
   \end{itemize}

   Photons detected during the dark scan are assumed to be part of the background radiation and are, therefore, detected in addition to the photons generated by the source. The model becomes
   \begin{equation}
      I_{\text{count}}( L ) = I_{\text{flat}}( L ) e^{-\int_L\mu \mathrm ds} + I_{\text{dark}}( L ).
   \end{equation}
   Notice that $I_{\text{flat}}( L )$ and $I_{\text{dark}}( L )$ do not involve the imaged object and can thus be estimated rather accurately. This is not true, however, for $I_{\text{count}}( L )$ and the Poisson random variable $N_{\text{count}}( L ) \sim \mathcal P\bigl( I_{\text{count}}( L ) \bigr)$ is measured instead.

   Using this principle, we can estimate the RT by
   \begin{equation}
      \mathcal R[ \mu ]( \theta_i, t_i ) \approx -\log\left( \frac{N_{\text{count}}( L_i ) - I_{\text{dark}}( L_i )}{I_{\text{flat}}( L_i )} \right),
   \end{equation}
   where $( \theta_i, t_i )$ parametrize the $i$-th line $L_i$ from source to detector according to the definition of the RT. To be precise,
   \begin{equation}
      L_i := \left\{ t_i\left(\begin{smallmatrix}\cos\theta_i\\ \sin\theta_i\end{smallmatrix}\right) + s\left(\begin{smallmatrix}-\sin\theta_i\\ \cos\theta_i\end{smallmatrix}\right) : s \in \mathbb R \right\}.
   \end{equation}

   Assuming the original image $\mu : \mathbb R^2 \to \mathbb R_+$ lies in a finite dimensional vector space generated by some basis $\{ \mu^1, \mu^2, \dots, \mu^n \}$, then it can be written as $\mu = \sum_{j = 1}^n x_j\mu^j$. Noticing, moreover, that the number of measurements is always finite in practice, one can reduce the problem of tomographic reconstruction to a linear system of equations:
   \begin{equation}
      R\vect x = \vect y,
   \end{equation}
   where $\vect x = ( x_1, x_2, \dots, x_n )^T$, the matrix $R$ is given componentwise by
   \begin{equation}
      r_{ij} = \mathcal R[\mu^j]( \theta_i, t_i ),
   \end{equation}
   and the elements $y_i$ of $\vect y$ are the corresponding Radon data, that is, $y_i = \mathcal R[ \mu ]( \theta_i, t_i )$. In practice, the above linear system of equations will be replaced by
   \begin{equation}
      R\vect x = \tilde{\vect y},
   \end{equation}
   where \elias{$\tilde{\vect y}$ is the experimentally obtained data.}{
   \begin{equation}
      {\tilde y}_i = -\log\left( \frac{N_{\text{count}}( L_i ) - I_{\text{dark}}( L_i )}{I_{\text{flat}}( L_i )} \right).
   \end{equation}}

   The set of sampled Radon coordinates $( \theta_i, t_i )$ was as follows. Let
   \begin{equation}
      T := \left\{ -1, -1 + \frac 2{2047}, -1 + 2\frac 2{2047}, -1 + 3\frac 2{2047}, \dots, -1 + 2046 \frac 2{2047}, 1 \right\}
   \end{equation}
   and
   \begin{equation}
      \Theta := \left\{ 0, \frac \pi{512}, 2\frac \pi{512}, 3\frac \pi{512}, \dots, 511\frac \pi{512} \right\}.
   \end{equation}
   Then
   \begin{equation}
      \{ ( \theta_1, t_1 ), ( \theta_2, t_2 ), \dots, ( \theta_m, t_m ) \} = \Theta \times T.
   \end{equation}

   The data acquisition was not performed directly through a photon counting sensor. Instead, a scintillator crystal~\cite{crystal} was used, which had a photon yield at $300$ K ($27^\circ$C) of $25$ Ph/KeV. The mean energy of the X-ray photons of the UVX line was around $13$ KeV. The optical coupling between the CCD detector and the scintillator achieved a $2$\% photon detection rate. Ignoring the crystal's finite temporal resolution, using an idealized monochromatic model for the light source, and ignoring the CCD's electric noise, data can be corrected dividing the CCD's visible light photon count by $6.5$ in order to estimate the X-ray photon count, which is a Poisson variable to which our methodology can be applied. We could also have used a Poissonian plus Gaussian model~\cite{lbu11} in the same way we have used the pure Poissonian model, but we wanted to keep our focus on the comparison between different Bregman divergences instead of on the noise modeling.

   \subsection{Reconstruction}

   Our discretization of the image space $[ -1, 1 ]^2$ was in a grid of $512 \times 512$ square pixels.\elias{}{ Precisely, let us define the indicator function $\chi_{S}$ of a set $S$:
   \begin{equation}
      \chi_{S}( \vect p ) := \begin{cases} 1 & \text{if } \vect p \in S \\ 0 & \text{otherwise.}\end{cases}
   \end{equation}
   Also denote the square sets, for $( i, j ) \in \{ 1, 2, \dots, 512 \}^2$, given by
   \begin{equation}
      S_{i, j} := \left[ -1 + ( j - 1 )\frac 2{512}, -1 + j\frac 2{512} \right) \times \left[ -1 + ( i - 1 )\frac 2{512}, -1 + i\frac 2{512} \right).
   \end{equation}
   Then, the basis functions are given by
   \begin{equation}
      \mu^{i, j} := \chi_{S_{i, j}},
   \end{equation}
   where we have made the lexicographic identification
   \begin{equation}
      \mu^{i, j} = \mu^{512( i - 1 ) + j}.
   \end{equation}}
   This intuitive non-overlapping basis of square pixels allows for reasonably efficient implementations of the matrix-vector products of the form $R\vect x$ and $R^T\vect y$.

   With the discretization of the problem fully defined by the basis functions and sampling scheme, we estimate the pixel values $x_i$ by solving
   \begin{equation}\label{eq:minimization}
      \min_{\vect x \in \mathbb R_+^n} \quad \frac12\| R\vect x - \tilde{\vect y} \|_2^2 + \gamma TV( \vect x ),
   \end{equation}
   with $\gamma \ge 0$ and $TV$ being the total variation
   \begin{equation}
      TV( \vect x ) := \sum_{i = 1}^n \sum_{j = 1}^n\sqrt{( x_{i, j} - x_{i, j - 1} )^2 + ( x_{i, j} - x_{i - 1, j} )^2},
   \end{equation}
   where we again have used the lexicographic identification $x_{i, j} = x_{512( i - 1 ) + j}$ and, by convention, the boundary condition $x_{0, j} = x_{i, 0} = 0$.

   We end summarizing the reconstruction method. For that, let us first denote:
   \begin{itemize}
      \item $\vect b := \bigl( N_{\text{count}}( L_1 ), N_{\text{count}}( L_2 ), \dots, N_{\text{count}}( L_{512 \times 2048} ) \bigr)^T$;
      \item $\vect f := \bigl( I_{\text{flat}}( L_1 ), I_{\text{flat}}( L_2 ), \dots, I_{\text{flat}}( L_{512 \times 2048} ) \bigr)^T$;
      \item $\vect d := \bigl( I_{\text{dark}}( L_1 ), I_{\text{dark}}( L_2 ), \dots, I_{\text{dark}}( L_{512 \times 2048} ) \bigr)^T$.
   \end{itemize}
   Then, the steps for computing $\vect B_{\gamma}( \vect b )$ are
   \begin{enumerate}
      \item Compute ${\tilde y}_i = -\log\left( \frac{b_i - d_i}{f_i} \right)$ for all $i \in \{ 1, 2, \dots, 512 \times 2048 \}$;
      \item Return the minimizer of~\eqref{eq:minimization}. The Fast Iterative Soft-Thresholding Algorithm (FISTA)~\cite{bet09b} was used to obtain the numerical minimizer.
   \end{enumerate}

   Because the input of this method is a vector of independent Poisson variables, we should be able to apply the estimator $\text{P-UPBRE}^f_\epsilon$ developed above in order to find an estimate of the optimal value for the regularization parameter $\gamma$. The final ingredient is the forward operator $\vect A$ which is given componentwise by
   \begin{equation}
      A_i( \vect x ) = d_i + f_i e^{-(R \vect x)_i},
   \end{equation}
   since this is the expected photon count over line $L_i$ for the image $\sum_{j = 1}^nx_j\mu^j$.

   \subsection{Bregman Functions}

   Throughout the numerical experimentation, we have tried three different Bregman divergences, two of which are modified versions of the Itakura-Saito \cite{fbd09} and the Kullback-Leibler \cite{kul51} divergences. These divergences are obtained using the following Bregman functions, respectively:
   \begin{equation}
      f_{\text{ms}}( \vect x ) := \sum_{i = 1}^n x_i^2,\quad f_{\text{kl}}( \vect x ) := \sum_{i = 1}^n x_i\underline\ln( x_i ),\quad\text{and}\quad f_{\text{is}}( \vect x ) := -\sum_{i = 1}^n \underline\ln( x_i ).
   \end{equation}
%
   Furthermore, we define
   \begin{equation}
      \underline\ln( x ) := \begin{cases}
                              \ln( x ) & x \geq\varepsilon\\
                              \displaystyle \ln( \varepsilon ) + \frac 1\varepsilon( x - \varepsilon ) - \frac 1{2\varepsilon^2}( x - \varepsilon )^2 & x < \varepsilon
                            \end{cases}.
   \end{equation}
   We have used $\varepsilon = 10^{-1}$ in all our experiments.

   The second order approximation $\underline\ln( x )$ for the logarithm near the negative orthant was used because it is not possible to use $\ln( x )$ directly, as in the original definition of both the Itakura-Saito and Kullback-Laibler divergences, due to difficulties in the boundary of the domain of definition of the resulting functionals. The original version of the Itakura-Saito divergence, for example, reads
   \begin{equation}
      \sum_{i = 1}^n\left\{ \frac{x_i}{y_i} - \ln\left( \frac{x_i}{y_i} \right) - 1 \right\},
   \end{equation}
   which is not well defined if either $x_i = 0$ or $y_i = 0$ for some $i \in \{ 1, 2, \dots, n \}$, a common situation in, e.g., emission tomography imaging tasks, where the data will likely contain several components $b_i = 0$.

   \subsection{Real world data results}


   \begin{figure}
      \centering%
      \includegraphics*[width=0.5\columnwidth,viewport=0.8in 0.5in 14in 9.5in]{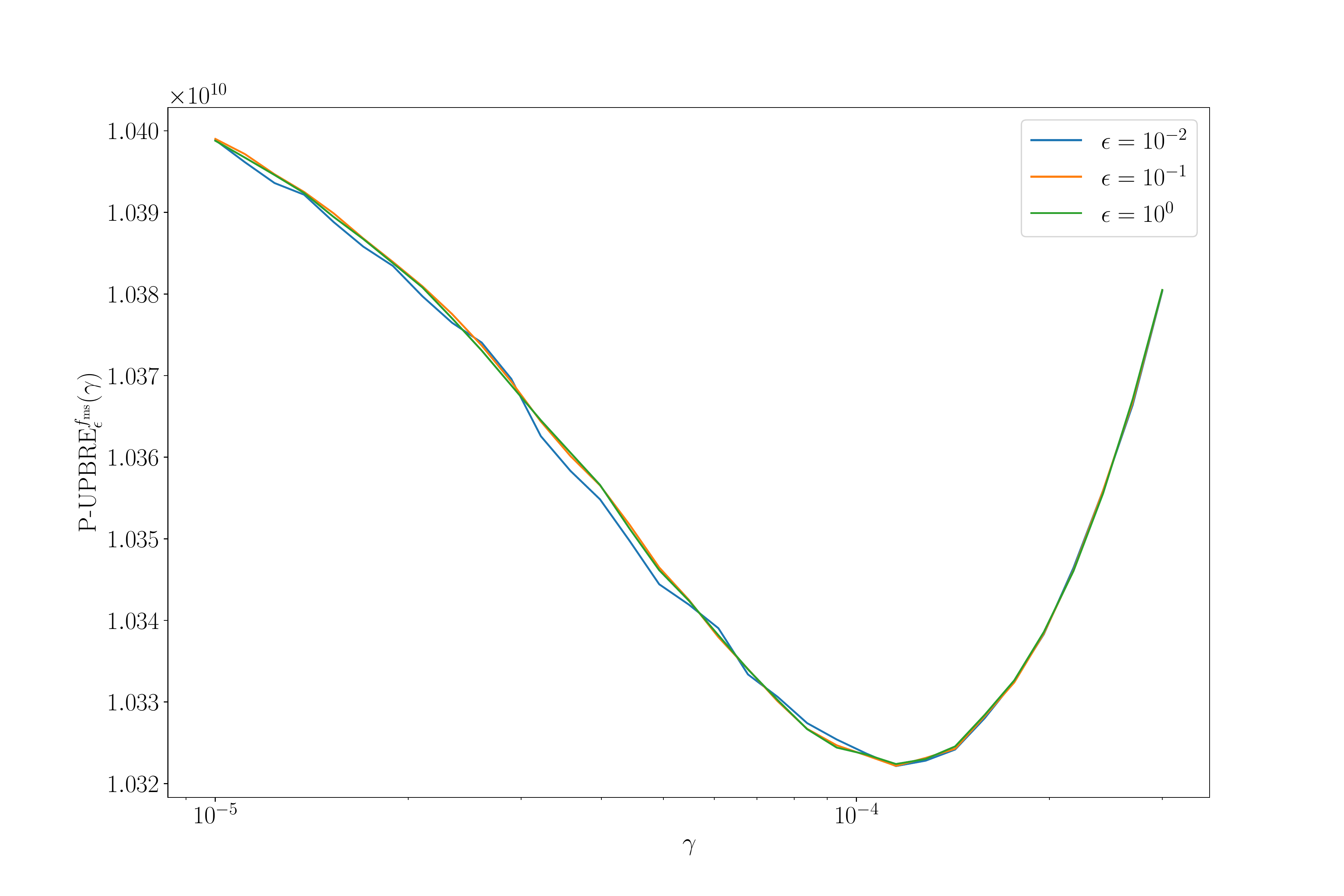}\includegraphics*[width=0.5\columnwidth,viewport=0.8in 0.5in 14in 9.5in]{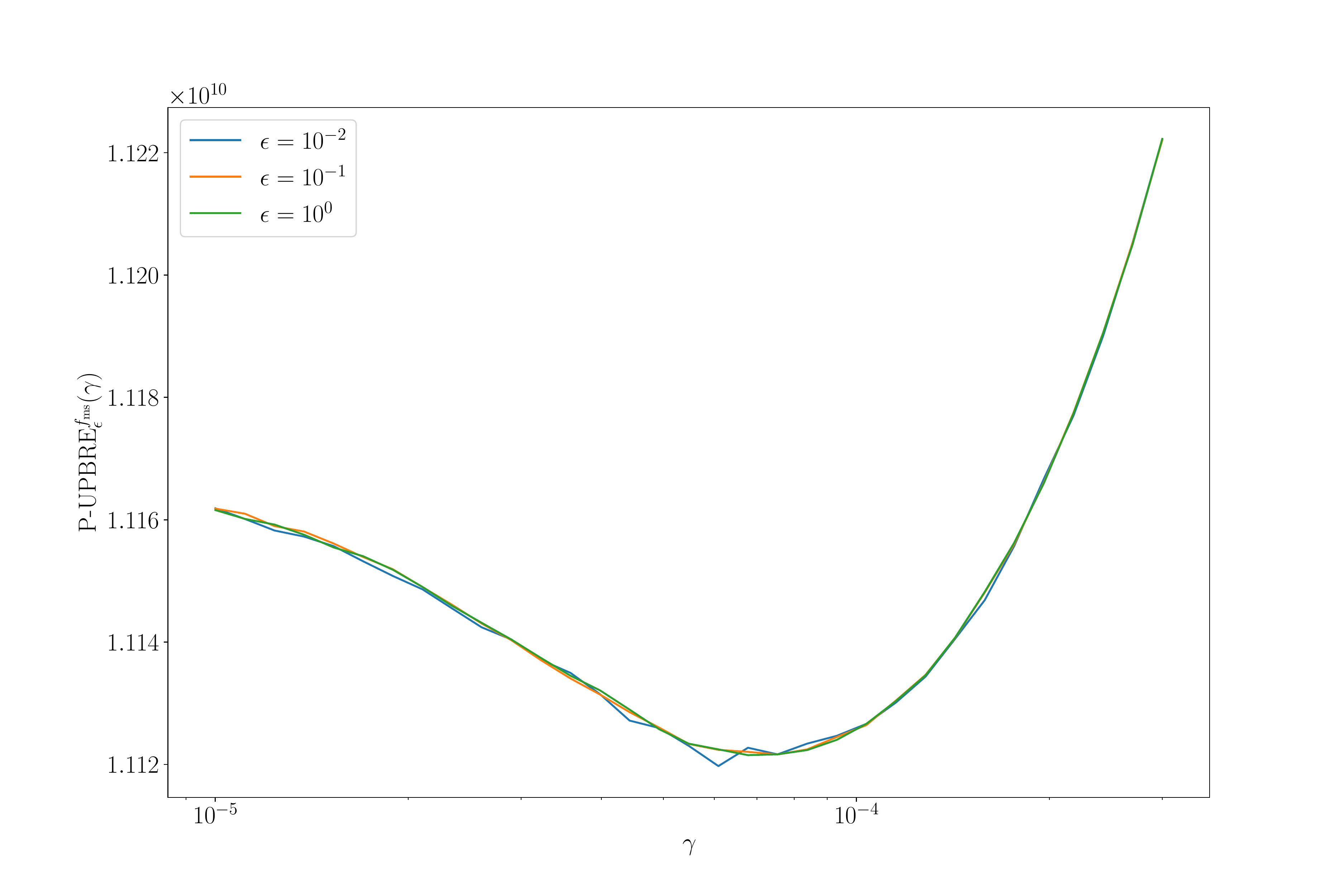}\\%
      \includegraphics*[width=0.5\columnwidth,viewport=0.8in 0.5in 14in 9.5in]{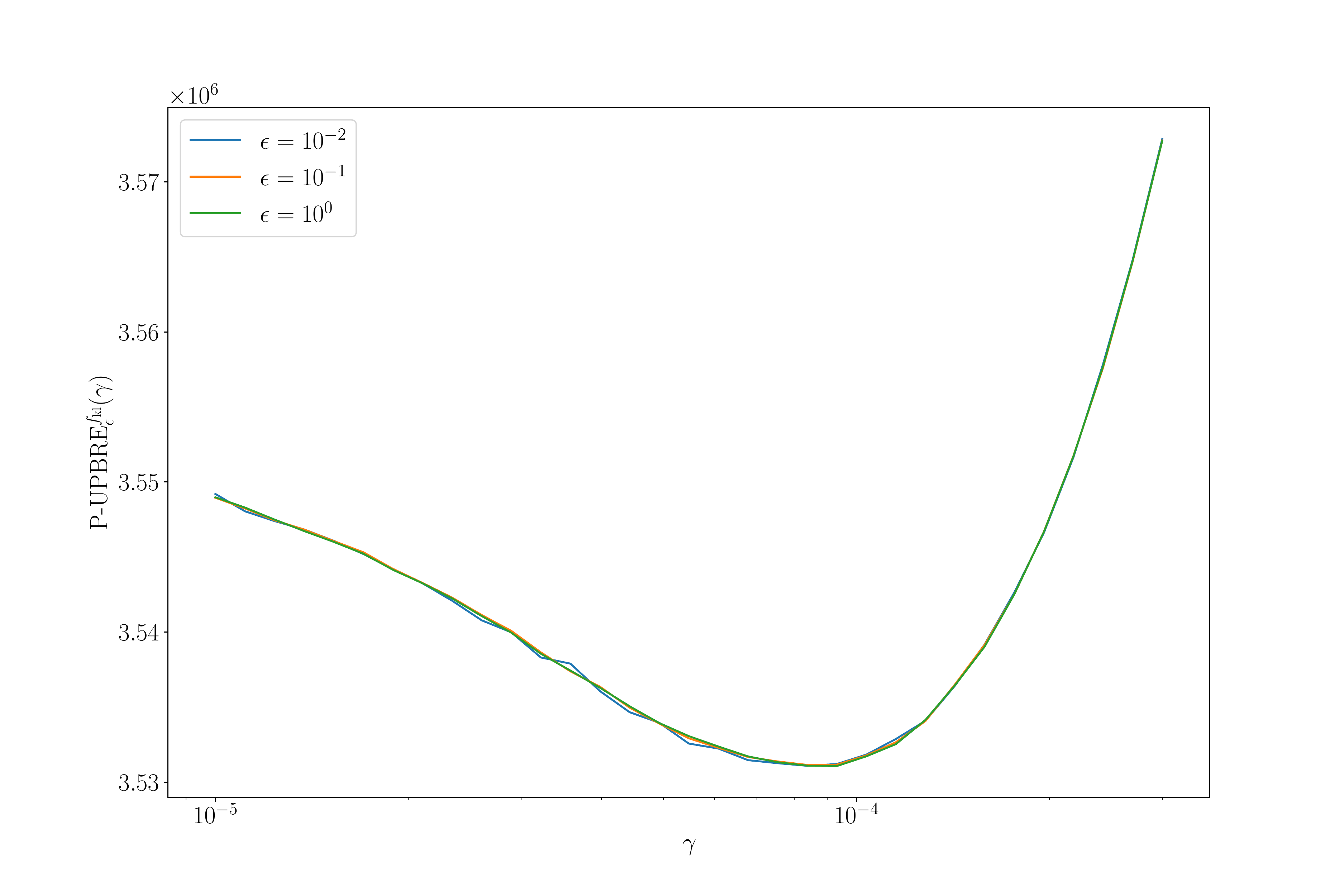}\includegraphics*[width=0.5\columnwidth,viewport=0.8in 0.5in 14in 9.5in]{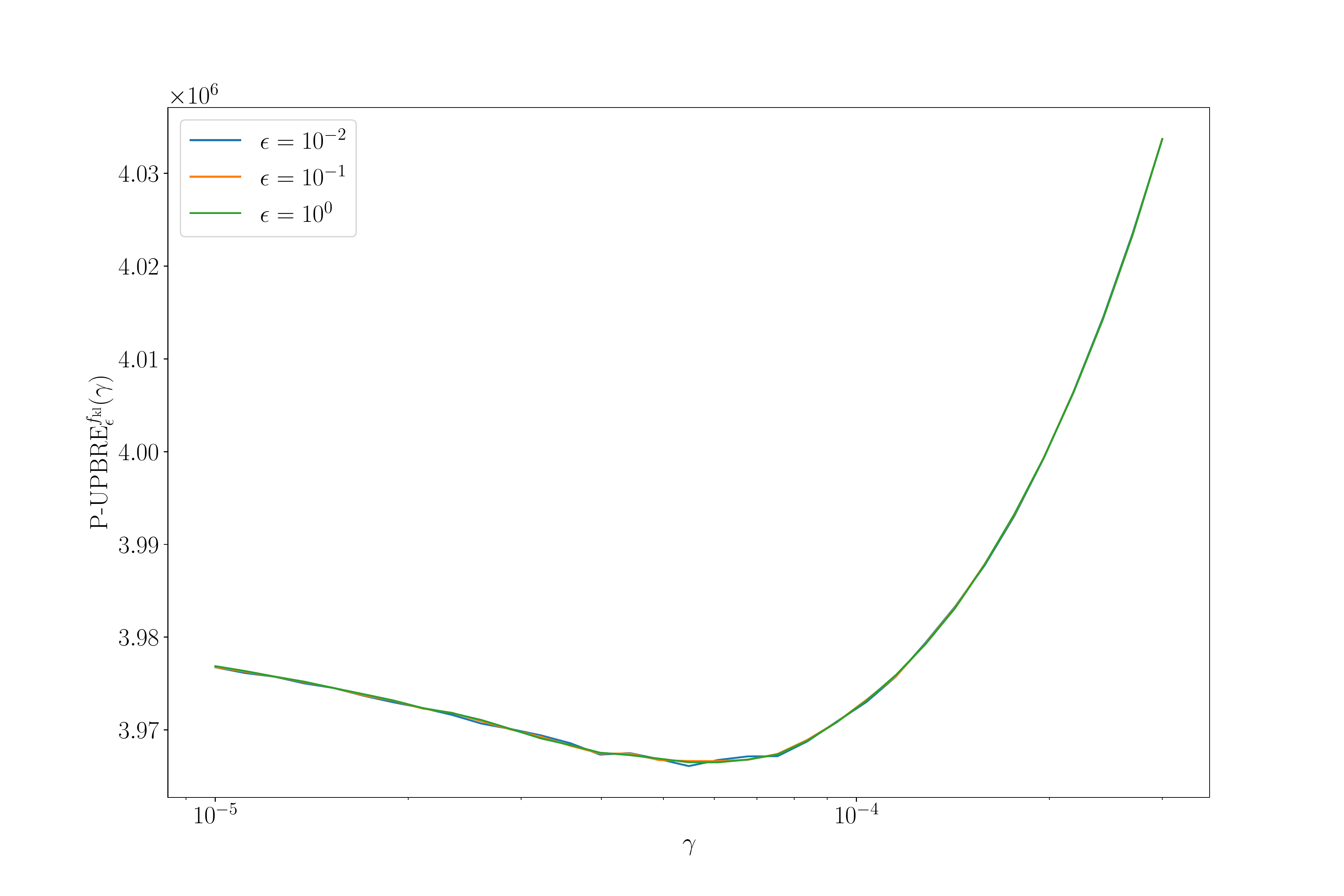}\\%
      \includegraphics*[width=0.5\columnwidth,viewport=0.8in 0.5in 14in 9.5in]{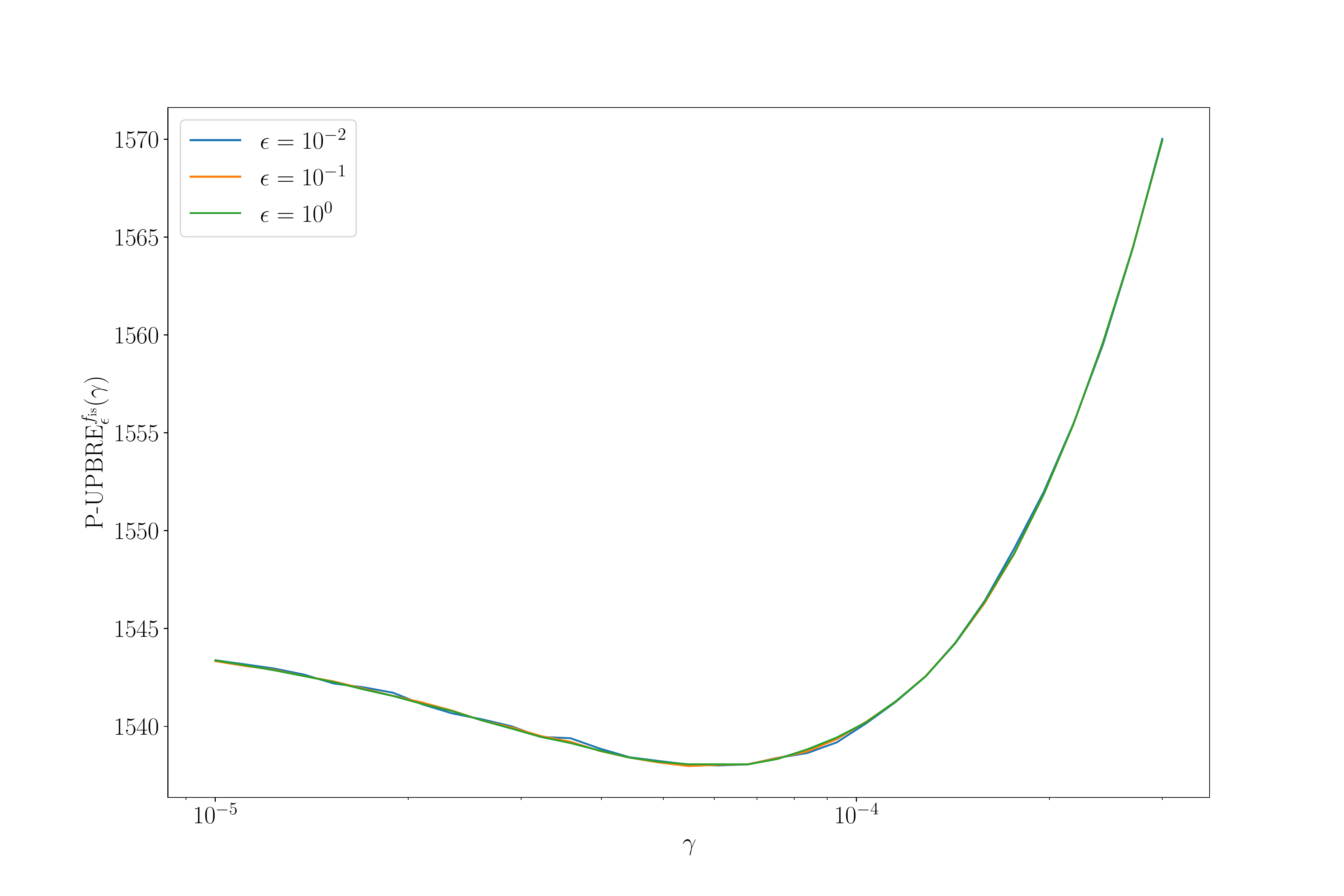}\includegraphics*[width=0.5\columnwidth,viewport=0.8in 0.5in 14in 9.5in]{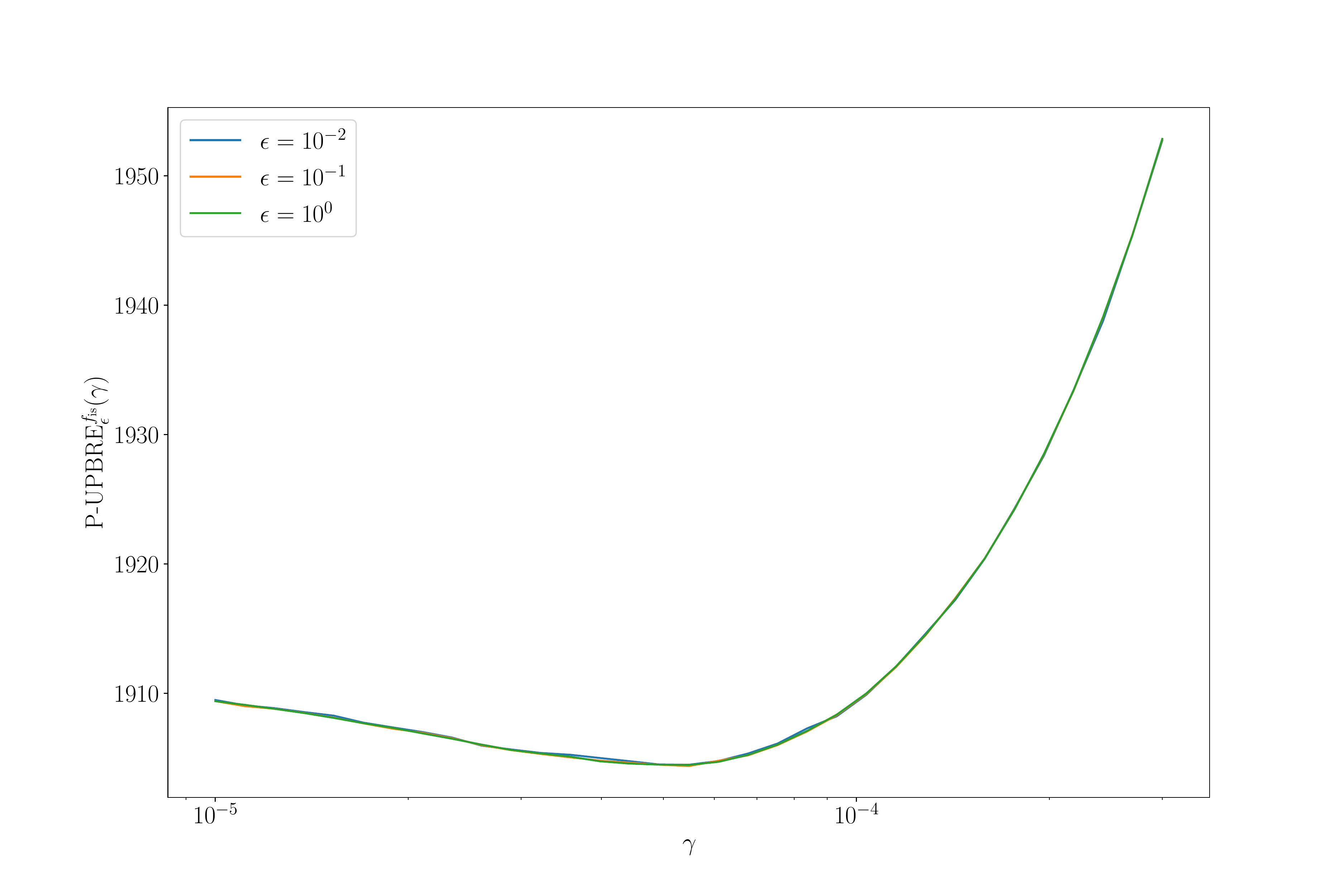}%
      \caption{Plots of the estimator $\text{P-UPBRE}^{f}_\epsilon( \gamma )$ over a range of values of $\gamma$ for three different values of $\epsilon$ and three different functions $f$. Top: $\text{P-UPBRE}^{f_{\text{ms}}}_{\epsilon}( \gamma )$. Center row: $\text{P-UPBRE}^{f_{\text{kl}}}_{\epsilon}( \gamma )$. Bottom: $\text{P-UPBRE}^{f_{\text{is}}}_{\epsilon}( \gamma )$. Left: slice shown in the left column of Figure~\ref{fig:images_seed_slices}. Right: slice shown in the center column of Figure~\ref{fig:images_seed_slices}.}
      \label{fig:seeds_plots}
   \end{figure}

   \begin{figure}
      \centering%
      \setlength{\grftotalwidth}{\textwidth}%
      \begin{grfgraphic}{%
         \def\grfxmin{-1.15}\def\grfxmax{2.0}%
         \def\grfymin{-3.5}\def\grfymax{0.5}%
      }%
         \pdfpxdimen=\dimexpr 1in/100\relax%
         \node[anchor=south,rotate=90,inner sep=\grflabelsep] at (-1.0,0.0) {\scriptsize No regularization\strut};%
         \node[anchor=south,rotate=90,inner sep=\grflabelsep] at (-1.0,-1.0) {\scriptsize Minimizer of P-UPBRE${}^{f_{\text{ms}}}_{10^{-1}}$};%
         \node[anchor=south,rotate=90,inner sep=\grflabelsep] at (-1.0,-2.0) {\scriptsize Minimizer of P-UPBRE${}^{f_{\text{kl}}}_{10^{-1}}$};%
         \node[anchor=south,rotate=90,inner sep=\grflabelsep] at (-1.0,-3.0) {\scriptsize Minimizer of P-UPBRE${}^{f_{\text{is}}}_{10^{-1}}$};%
         \node[inner sep=0pt] at (-0.5, 0.0) {\includegraphics[width=\grfxunit]{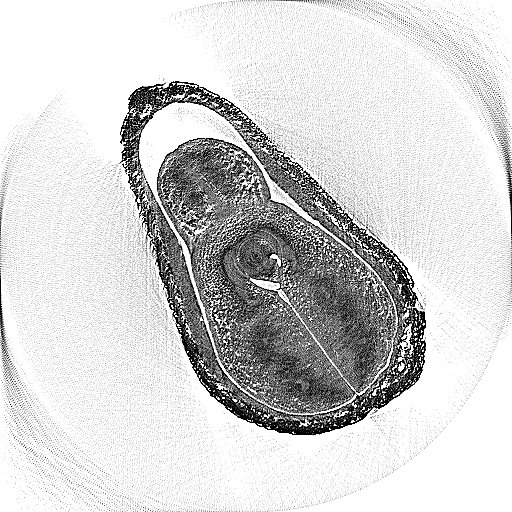}};%
         \node[inner sep=0pt] at ( 0.5, 0.0) {\includegraphics[width=\grfxunit]{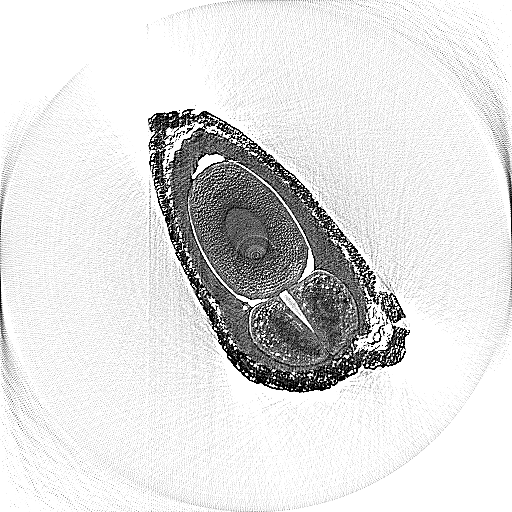}};%
         \draw[line width=0.75pt,opacity=1.0,white] ( 0.4141, -0.1836 ) rectangle ( 0.6641, 0.0664 );%
         \node[inner sep=0pt] at ( 1.5, 0.0) {\includegraphics*[viewport=212px 162px 339px 289px,width=\grfxunit]{seed_slice_A_noreg}};%
         \node[inner sep=0pt] at (-0.5,-1.0) {\includegraphics[width=\grfxunit]{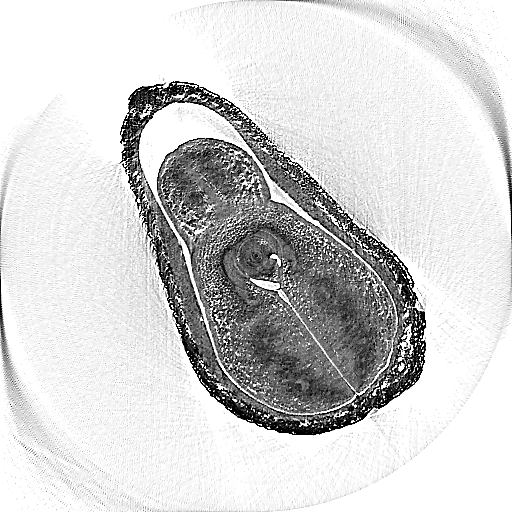}};%
         \node[inner sep=0pt] at ( 0.5,-1.0) {\includegraphics[width=\grfxunit]{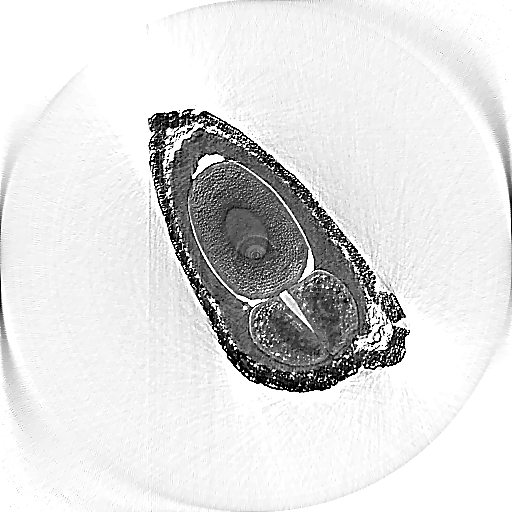}};%
         \draw[line width=0.75pt,opacity=1.0,white] ( 0.4141, -1.1836 ) rectangle ( 0.6641, -0.9336 );%
         \node[inner sep=0pt] at ( 1.5,-1.0) {\includegraphics*[viewport=212px 162px 339px 289px,width=\grfxunit]{seed_slice_A_eps_-1}};%
         \node[inner sep=0pt] at (-0.5,-2.0) {\includegraphics[width=\grfxunit]{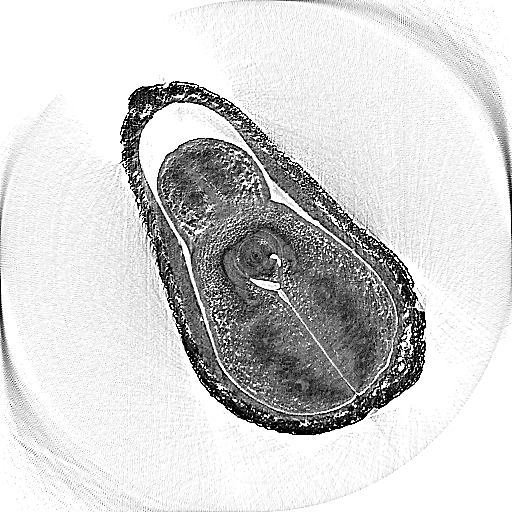}};%
         \node[inner sep=0pt] at ( 0.5,-2.0) {\includegraphics[width=\grfxunit]{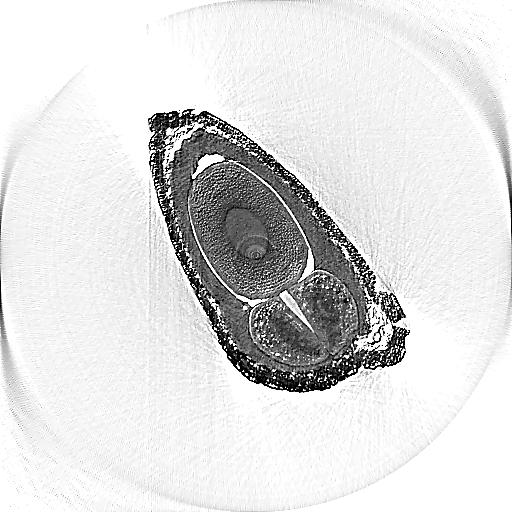}};%
         \draw[line width=0.75pt,opacity=1.0,white] ( 0.4141, -2.1836 ) rectangle ( 0.6641, -1.9336 );%
         \node[inner sep=0pt] at ( 1.5,-2.0) {\includegraphics*[viewport=212px 162px 339px 289px,width=\grfxunit]{seed_slice_A_eps_-1_kl}};%
         \node[inner sep=0pt] at (-0.5,-3.0) {\includegraphics[width=\grfxunit]{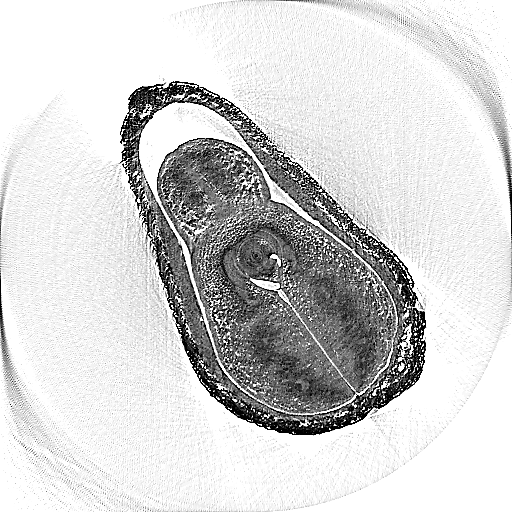}};%
         \node[inner sep=0pt] at ( 0.5,-3.0) {\includegraphics[width=\grfxunit]{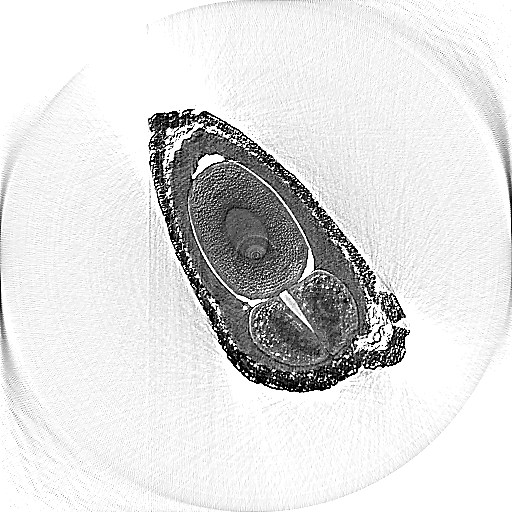}};%
         \draw[line width=0.75pt,opacity=1.0,white] ( 0.4141, -3.1836 ) rectangle ( 0.6641, -2.9336 );%
         \node[inner sep=0pt] at ( 1.5,-3.0) {\includegraphics*[viewport=212px 162px 339px 289px,width=\grfxunit]{seed_slice_A_eps_-1_is}};%
         \draw[grfaxisstyle] (-1.0,-0.5) rectangle (0.0, 0.5);%
         \draw[grfaxisstyle] ( 0.0,-0.5) rectangle (1.0, 0.5);%
         \draw[grfaxisstyle] ( 1.0,-0.5) rectangle (2.0, 0.5);%
         \draw[grfaxisstyle] (-1.0,-1.5) rectangle (0.0,-0.5);%
         \draw[grfaxisstyle] ( 0.0,-1.5) rectangle (1.0,-0.5);%
         \draw[grfaxisstyle] ( 1.0,-1.5) rectangle (2.0,-0.5);%
         \draw[grfaxisstyle] (-1.0,-2.5) rectangle (0.0,-1.5);%
         \draw[grfaxisstyle] ( 0.0,-2.5) rectangle (1.0,-1.5);%
         \draw[grfaxisstyle] ( 1.0,-2.5) rectangle (2.0,-1.5);%
         \draw[grfaxisstyle] (-1.0,-3.5) rectangle (0.0,-2.5);%
         \draw[grfaxisstyle] ( 0.0,-3.5) rectangle (1.0,-2.5);%
         \draw[grfaxisstyle] ( 1.0,-3.5) rectangle (2.0,-2.5);%
      \end{grfgraphic}
      \caption{From top to bottom: images reconstructed with no regularization; images reconstructed with the regularization parameter set as the minimizer of $\text{P-UPBRE}^{f_{\text{ms}}}_{10^{-1}}( \gamma )$ over the tested values of~$\gamma$; images reconstructed with the regularization parameter set as the minimizer of $\text{P-UPBRE}^{f_{\text{kl}}}_{10^{-1}}( \gamma )$ over the tested values of~$\gamma$; images reconstructed with the regularization parameter set as the minimizer of $\text{P-UPBRE}^{f_{\text{is}}}_{10^{-1}}( \gamma )$ over the tested values of~$\gamma$. Left and center: reconstructions of different slices of an apple seed. Right: detail of the center reconstruction.}\label{fig:images_seed_slices}
   \end{figure}


   In this subsection we will reconstruct images of slices of an apple seed scanned at the UVX tomography line of the LNLS. This first experiment is intended to assess the behaviour of the method with \elias{respect}{relation} to changes in the numerical differentiation parameter $\epsilon$ and to changes in the Bregman function. Also, it serves as a proof of concept of the methodology applied to a realistic situation.

   We first compute $\text{P-UPBRE}^f_\epsilon( \gamma )$ for $\epsilon \in \{ 10^{-2}, 10^{-1}, 10^0\}$ and for $33$ values of $\gamma$ logarithmically spaced in the range $[ 10^{-5}, 3\cdot 10^{-4} ]$. \elias{This range was selected for best visualization of the most relevant region of the domain. The value of the estimator rapidly increases outside the displayed range, which is good, for example, for numerical minimization tasks.}{} Each of the values for the discretization parameter $\epsilon$ gives rise to a curve $\bigl( \gamma, \text{P-UPBRE}^f_{\epsilon}( \gamma ) \bigr)$. In Figure~\ref{fig:seeds_plots} we see plots of these curves for $\epsilon \in \{ 10^{-2}, 10^{-1}, 10^0 \}$ (grouped in the same graphic) for all the functions $f_{\text{ms}}$, $f_{\text{kl}}$, and $f_{\text{is}}$ and for two different slices of the apple seed. There we can see that the method is reasonably robust to the choice of the numerical differentiation parameter. When this parameter becomes too small, an oscillation behavior due to numerical and floating point errors is noticeable in the curve. Tuning of the parameter can be done by gradually increasing the parameter until the oscillatory behaviour is eliminated.

   The minimizer of $\text{P-UPBRE}^f_{10^{-1}}$ should be a sound choice for the regularization parameter for the tomographic reconstruction problem. Figure~\ref{fig:images_seed_slices} shows that images reconstructed using such a minimizer as the regularization parameter indeed present a good balance between noise-removal and feature retention. Some of the artifacts seen in the images are from imperfections in the acquisition setup, such as the ring-shaped artifacts and the streaks. These are not supposed to be eliminated by the regularization. The noise, on the other hand, should be reduced. This can indeed be seen to be the case.

   A major contribution of the present paper is to generalize the idea of $\text{P-UPBRE}^{f_{\text{ms}}}_{\epsilon}$ to more general Bregman divergences. This is why we have reconstructed images from the same datasets using the minimizers of $\text{P-UPBRE}^{f_{\text{ms}}}_{\epsilon}$, $\text{P-UPBRE}^{f_{\text{kl}}}_{\epsilon}$, and $\text{P-UPBRE}^{f_{\text{is}}}_{\epsilon}$ as regularization parameters. It is possible to notice that even under the coarse sampling of the parameter space that we have used, there seems to be some noticeable, although not very large, differences among the selected regularization parameters. The next set of experiments is designed to make sense of these differences in a more controlled setting, where comparisons against a ground truth, made possible by the simulated nature of the experiment, are made.

   \subsection{Simulated data results}

   For the following set of experiments, data was simulated. The dark and flat scans were taken from the first seed slice dataset described above, but the count was simulated as a random Poisson variable:
   \begin{equation}
      N_{\text{count}}( L_i ) \sim \mathcal P\left( I_{\text{flat}}( L_i )e^{-\mathcal R[ \mu^\dagger ]( \theta_i, t_i )} + I_{\text{dark}}( L_i ) \right),
   \end{equation}
   where $\mu^\dagger$ is a $2048 \times 2048$ discretization of the Shepp-Logan~\cite{kas88} head phantom. Because the reconstruction used a $512 \times 512$ discretization, the inverse crime~\cite{kas07} is avoided.

   Each experiment consisted of generating the dataset, then minimizing P-UPBRE${}^f_\epsilon( \gamma )$ for some fixed $f$ and $\epsilon$, then minimizing $\| \vect x^\gamma - \vect x^* \|_2^2$ over $\gamma$, and then minimizing $D_f\bigl( \vect A( \vect x^* ), \vect A( \vect x^\gamma ) \bigr)$. This was repeated $20$ times for each pair $( f, \epsilon )$. The result is summarized in Figure~\ref{fig:box_minims}, where it can be seen that the regularization parameter does seem to somehow depend on the function used for the Bregman divergence.

   \begin{figure}
      \includegraphics*[viewport=0.84in 0in 9.5in 6.5in,width=0.5\textwidth]{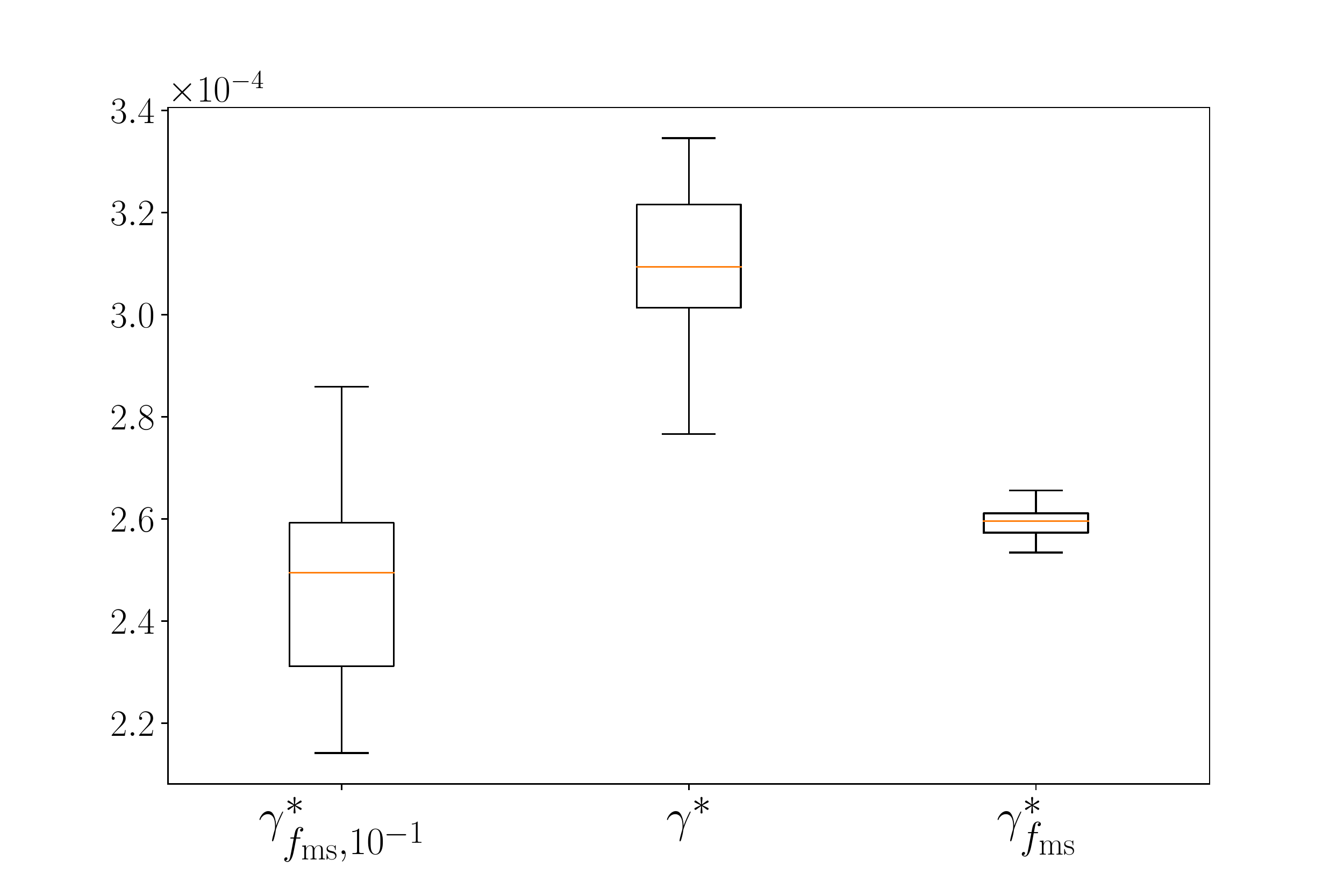}\includegraphics*[viewport=0.84in 0in 9.5in 6.5in,width=0.5\textwidth]{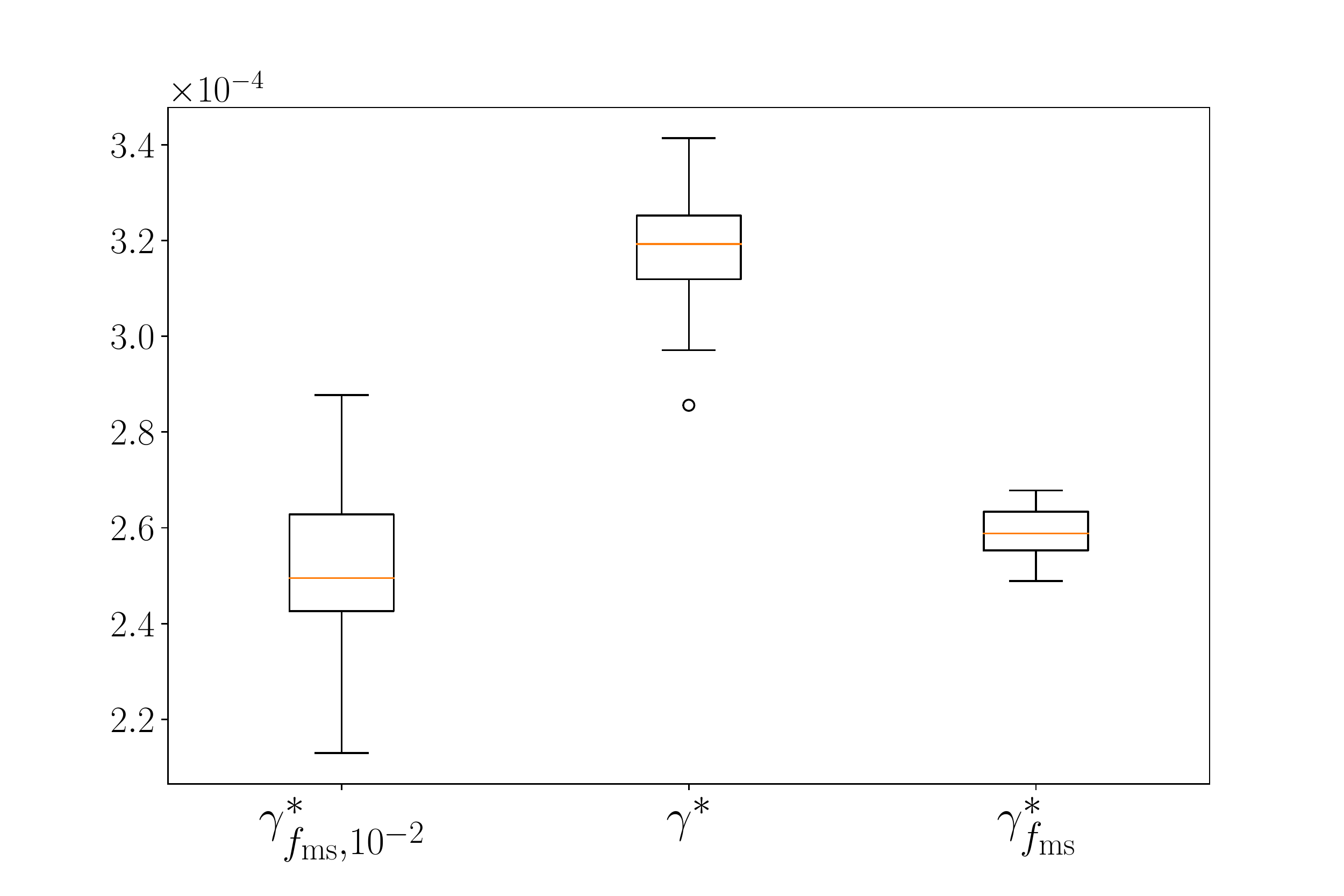}\\%
      \includegraphics*[viewport=0.84in 0in 9.5in 6.5in,width=0.5\textwidth]{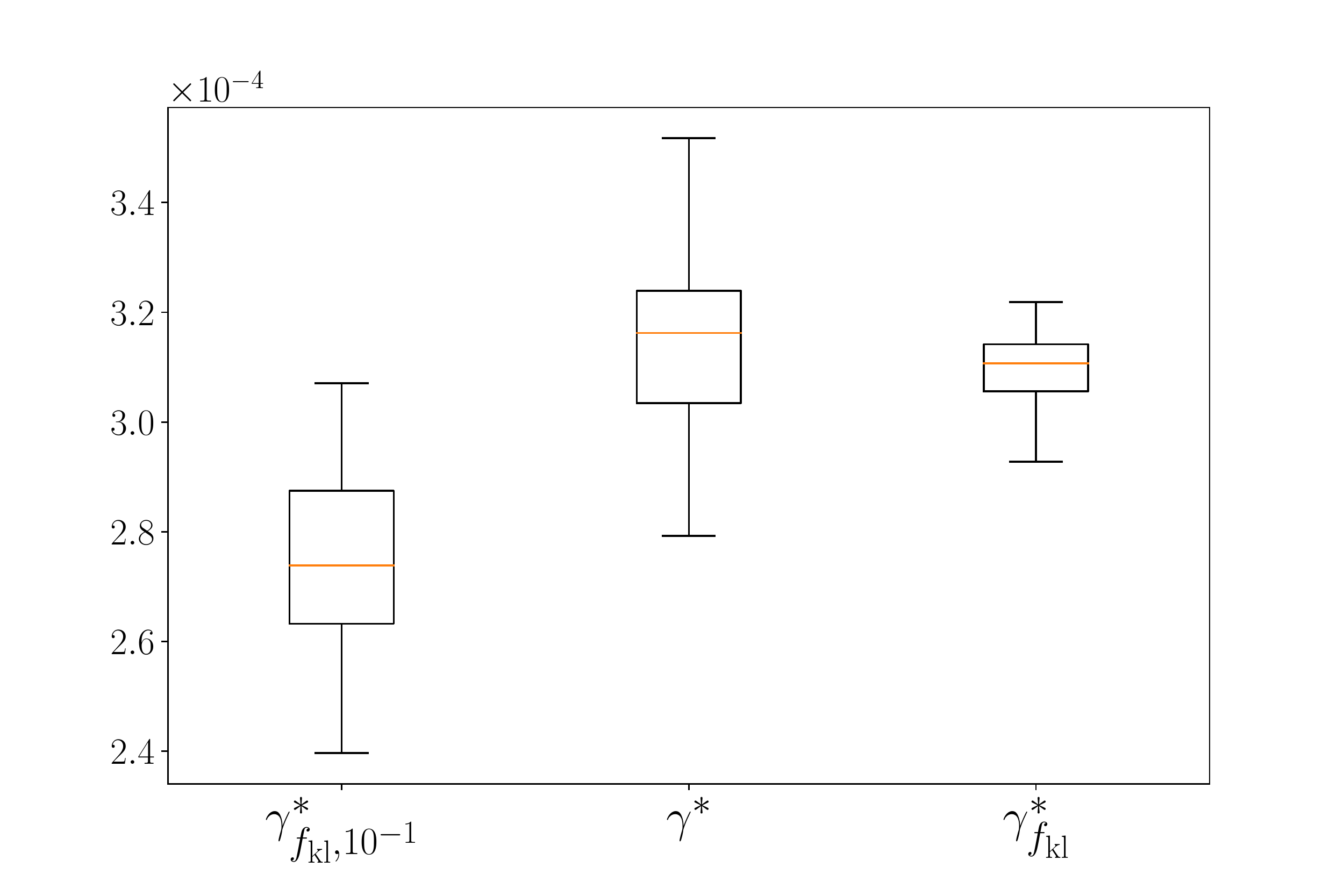}\includegraphics*[viewport=0.84in 0in 9.5in 6.5in,width=0.5\textwidth]{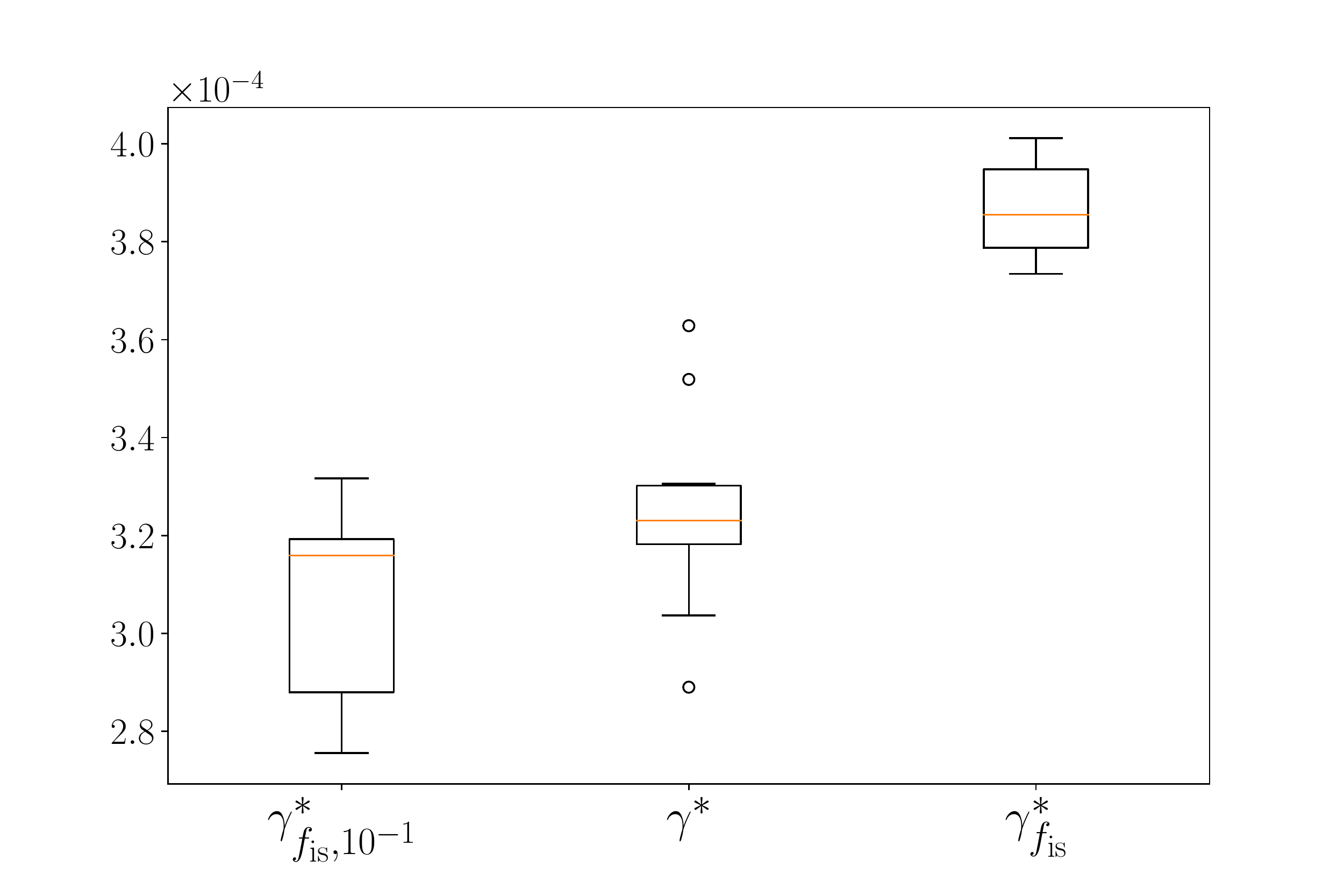}%
      \caption{Boxplots of the minimizers $\gamma^*_{f, \epsilon}$ of P-UPBRE${}^f_\epsilon( \gamma )$, the minimizers $\gamma^*$ of the squared error $\| \vect x^\gamma - \vect x^* \|_2^2$, and the minimizers $\gamma^*_f$ of the predictive Bregman error $D_f\bigl( \vect A( \vect x^* ), \vect A( \vect x^\gamma ) \bigr)$. Top left: $( f, \epsilon ) = ( f_{\text{ms}}, 10^{-1} )$. Top right: $( f, \epsilon ) = ( f_{\text{ms}}, 10^{-2} )$. Bottom left: $( f, \epsilon ) = ( f_{\text{kl}}, 10^{-1} )$. Bottom right: $( f, \epsilon ) = ( f_{\text{is}}, 10^{-1} )$.}\label{fig:box_minims}
   \end{figure}

   \begin{figure}
      \includegraphics*[viewport=0.84in 0in 9.5in 6.3in,width=0.5\textwidth]{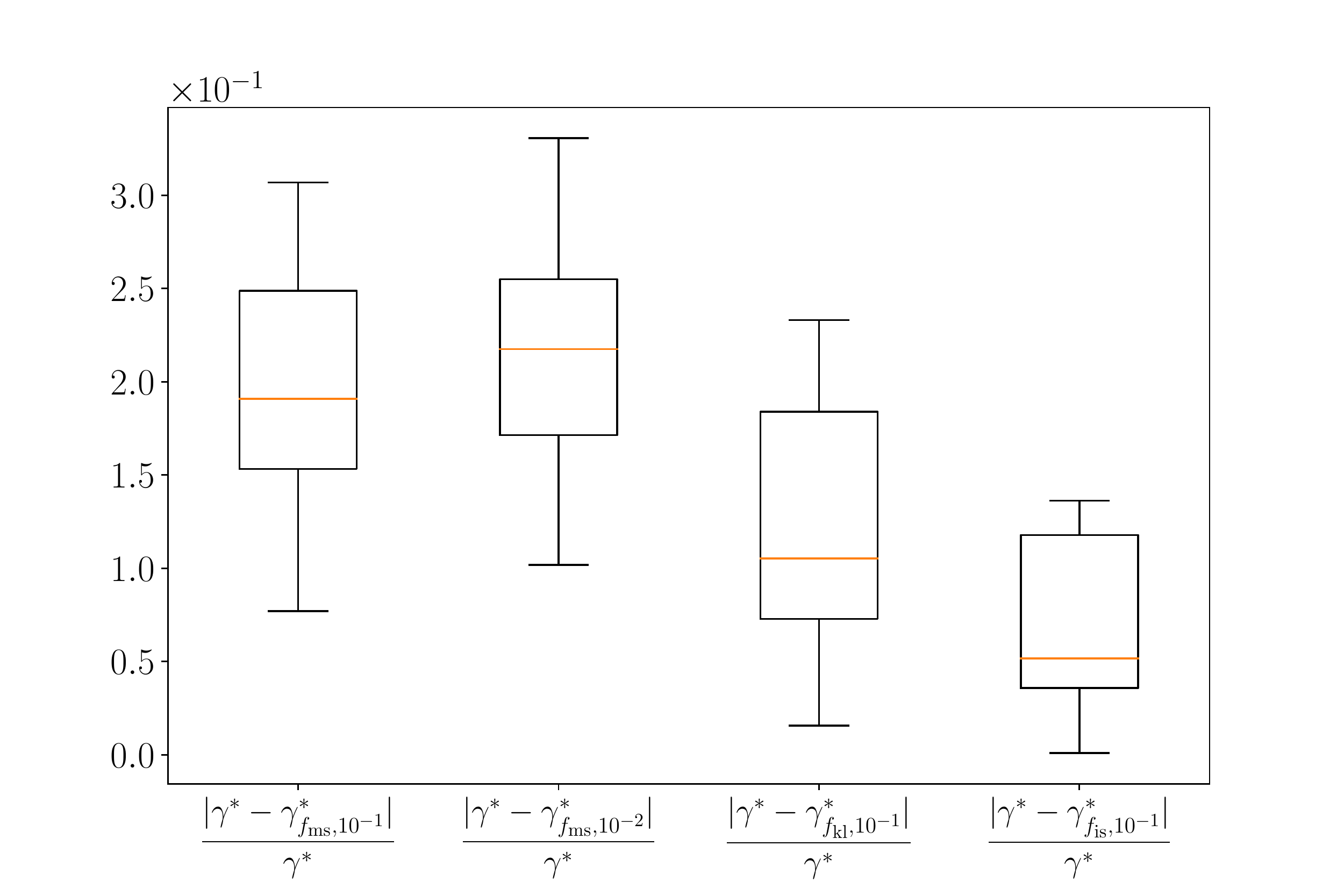}\includegraphics*[viewport=0.84in 0in 9.5in 6.3in,width=0.5\textwidth]{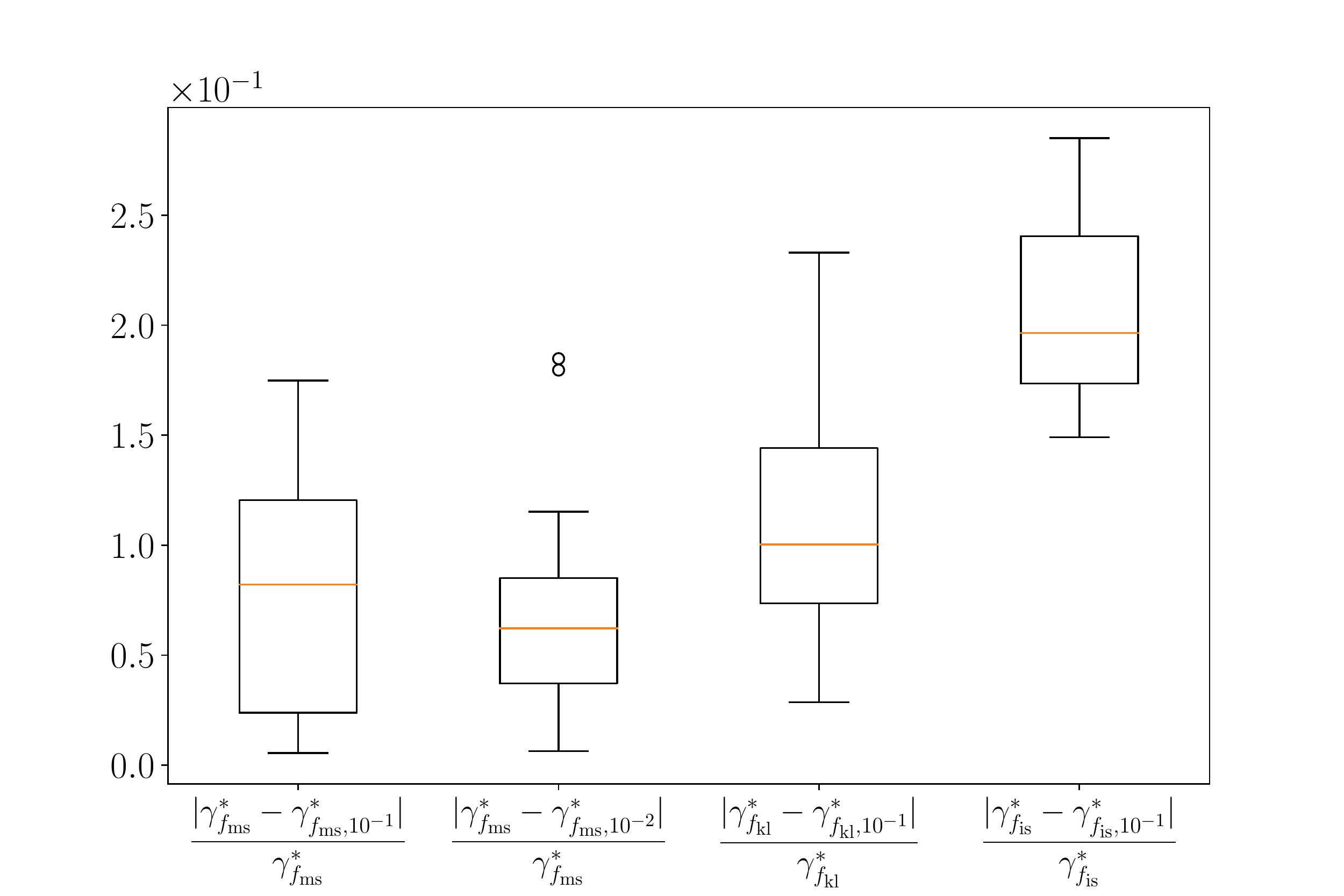}%
      \caption{Left: relative differences $| \gamma^* - \gamma^*_{f, \epsilon} | / \gamma^*$ between each minimizer $\gamma^*_{f, \epsilon}$ of P-UPBRE${}^f_\epsilon( \gamma )$ and the respective minimizer $\gamma^*$ of the squared error $\| \vect x^\gamma - \vect x^* \|_2^2$. Right: relative differences $| \gamma^*_{f} - \gamma^*_{f, \epsilon} | / \gamma^*_{f}$ between each minimizer $\gamma^*_{f, \epsilon}$ of P-UPBRE${}^f_\epsilon( \gamma )$ and the respective minimizer $\gamma^*_{f}$ of the predictive Bregman error $D_f\bigl( \vect A( \vect x^* ), \vect A( \vect x^\gamma ) \bigr)$.}\label{fig:box_errors}
   \end{figure}

   For ease of exposition, let us denote
   \begin{multline*}
      \gamma^* := \argmin_{\gamma} \| \vect x^* - \vect x^\gamma \|, \quad \gamma^*_f := \argmin_{\gamma} D_f( \vect x^*, \vect x^\gamma ), \\\text{and}\quad \gamma^*_{f, \epsilon} := \argmin_\gamma \text{P-UPBRE}^f_\epsilon( \gamma ).
   \end{multline*}
   \elias{Notice that these are minimization problems in one variable. Moreover, because we are performing simulated experiments where $\vect x^*$ is known, the objective function is easily computable. Therefore, the approximation of the above minimizers is not a complicated task and we have used the one-dimensional improved golden-section method as implemented in the {\texttt {minimize\_scalar}} routine of the SciPy package}{}.
   
   These minimizers happen to be random variables that concentrate rather tightly around its median, as shown in the experiments. We have repeated the simulated data generation $20$ times for each pair
   \begin{equation}
      ( f, \epsilon ) \in \{ ( f_{\text{ms}}, 10^{-1} ), ( f_{\text{ms}}, 10^{-2} ), ( f_{\text{kl}}, 10^{-1} ), ( f_{\text{is}}, 10^{-1} ) \}
   \end{equation}
   and we numerically computed $\gamma^*$, $\gamma^*_f$, and $\gamma^*_{f, \epsilon}$ for each of these simulated datasets. Figure~\ref{fig:box_minims} brings boxplots of the minimizers. We can observe that in fact there is not much variation, which is a useful property as we shall see in the next section.

   For now we would like to focus on the fact that the minimizers $\gamma^*_{f, \epsilon}$ are consistently below the minimizers $\gamma^*_f$, which is not surprising as the numerical differentiation scheme will affect the smoothness of the finite difference part of the estimator. Noticeably, in this example $\gamma^*_{f_{\text{is}}}$ tends to be larger than $\gamma^*$, which compensates for this fact and makes $\gamma^*_{f_{\text{is}}}$ the most accurate of the estimators for the optimal regularization parameter $\gamma^*$ as can be seen on the left of Figure~\ref{fig:box_errors}. This is so even though the difference $|\gamma^*_{f_{\text{ms}}} -  \gamma^*_{{f_{\text{ms}}}, 10^{-1}} |$ is larger than $|\gamma^*_f -  \gamma^*_{f, 10^{-1}} |$ for the other $f$, which can be seen on the right of Figure~\ref{fig:box_errors}.

   In order to verify these results with other images, discretization and noise setups, we have reconstructed a mouse head slice from the high-resolution tomographic atlas published in~\cite{mtz21}. This was done with data simulated in three different ways, two of which purposely not trying to avoid the inverse crime. The other reconstruction of the mouse head slice used the same setup as before with the Shepp-Logan phantom. Finally, we have also reconstructed the Shep-Logan phantom, this time simulating noisier data.

   The results can be seen in Figure~\ref{fig:box_minims_outros}, where the minimization of the estimator based on the Itakura-Saito divergence seems to consistently have a slightly superior accuracy. The only exception might be a statistical artifact of the small sample size of $10$ simulations for each combination of noise level, image, and discretization. Figure~\ref{fig:images_other_slices} shows some of the reconstructions that were obtained by the minimization of some of the proposed estimators. Figure~\ref{fig:images_other_slices_ideal} shows the original images we chose to reconstruct and the ``best'' reconstruction.

   Notice that in the case of the mouse head, the reconstruction obtained using $\gamma$ that minimizes $\| \vect x^* - \vect x^\gamma \|_2^2$ is noticeably smoother than those obtained minimizing P-UPBRE${}^{f}_{\epsilon}$. The reason for this seems to be that the image $\vect x^*$ used in the comparison is in fact a smoothed version of $\vect x^\dagger$, the one used to generate the data, because the resolution of $\vect x^*$ is lower than the resolution of $\vect x^\dagger$. This means that the model discrepancy confuses the estimator, which takes only the noise model into consideration. It appears that our technique allows for some of the systematic error introduced by the model inaccuracy to be reduced by minimizing a different Bregman divergence instead of the squared norm.
   
   \elias{It is interesting to observe that our methodology generalizes the approach of \cite{mab21} in the sense that if we plug the $KL$ divergence in our general approach we get the same estimator obtained in \cite{mab21}, both before and after applying the Monte-Carlo procedure (which we took from~\cite{lbu11}). It should be noted, however, that  stronger claims are proven in~\cite{mab21} about the particular regularization technique being used, which provide further insight on the nature of the approximation. Indeed, they have shown that, as the Poisson parameters increase (i.e., the relative noise level decreases), approximation~\eqref{eq:TaylorApproxPoisson} becomes more accurate for that particular regularization technique (early stopping of the EM algorithm).}{}
   
   \elias{Finally, we remark that the experiments we have presented regarding transmission tomography are unique because they deal with a model where the acquired data is a nonlinear function of the Poisson variables. Our experiments with simulated and real data show that predictive error-based are flexible and robust enough to cope with such circumstances.}{}

   \begin{figure}
      \centering%
      \setlength{\grftotalwidth}{\textwidth}%
      \begin{grfgraphic}{%
         \def\grfxmin{-1.15}\def\grfxmax{2.0}%
         \def\grfymin{-1.5}\def\grfymax{0.5}%
      }%
         \pdfpxdimen=\dimexpr 1in/100\relax%
         \node[anchor=south,rotate=90,inner sep=\grflabelsep] at (-1.0,0.0) {\scriptsize Original image\strut};%
         \node[anchor=south,rotate=90,inner sep=\grflabelsep] at (-1.0,-1.0) {\scriptsize Minimizer of $\| \vect x^* - \vect x^\gamma \|$};%
         \node[inner sep=0pt] at (-0.5, 0.0) {\includegraphics[width=\grfxunit]{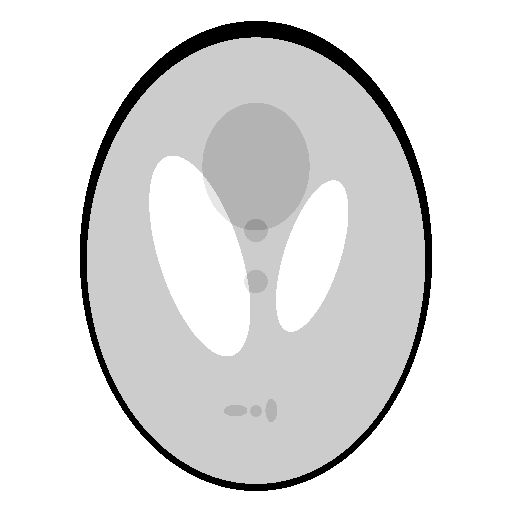}};%
         \node[inner sep=0pt] at ( 0.5, 0.0) {\includegraphics[width=\grfxunit]{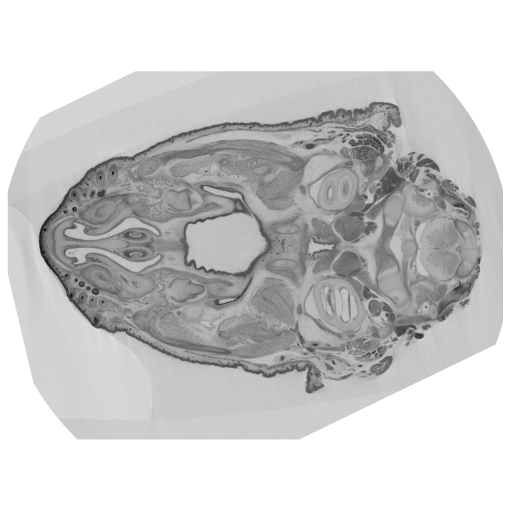}};%
         \draw[line width=0.75pt,opacity=1.0,white] ( 0.5859, 0.0469 ) rectangle ( 0.8359, 0.2969 );%
         \node[inner sep=0pt] at ( 1.5, 0.0) {\includegraphics*[viewport=300px 280px 427px 407px,width=\grfxunit]{img_small}};%
         \node[inner sep=0pt] at (-0.5,-1.0) {\includegraphics[width=\grfxunit]{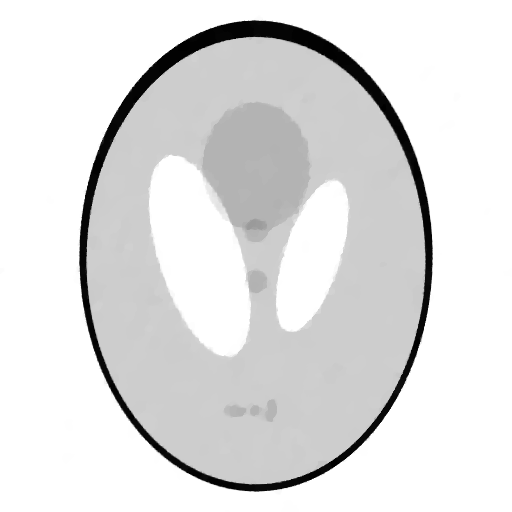}};%
         \node[inner sep=0pt] at ( 0.5,-1.0) {\includegraphics[width=\grfxunit]{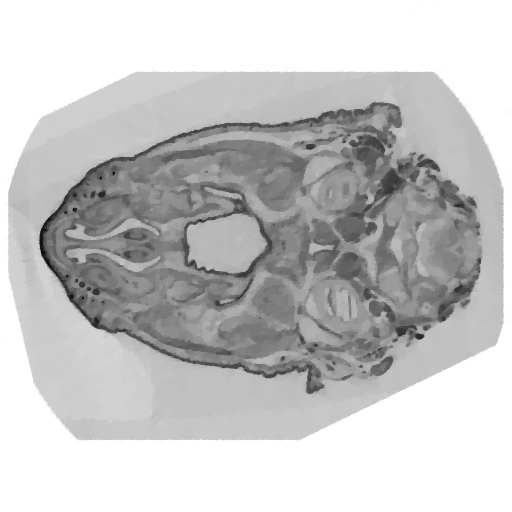}};%
         \draw[line width=0.75pt,opacity=1.0,white] ( 0.5859, -0.9531 ) rectangle ( 0.8359, -0.7031 );%
         \node[inner sep=0pt] at ( 1.5,-1.0) {\includegraphics*[viewport=300px 280px 427px 407px,width=\grfxunit]{results_all_kappa65_rato_ideal}};%
         \draw[grfaxisstyle] (-1.0,-0.5) rectangle (0.0, 0.5);%
         \draw[grfaxisstyle] ( 0.0,-0.5) rectangle (1.0, 0.5);%
         \draw[grfaxisstyle] ( 1.0,-0.5) rectangle (2.0, 0.5);%
         \draw[grfaxisstyle] (-1.0,-1.5) rectangle (0.0,-0.5);%
         \draw[grfaxisstyle] ( 0.0,-1.5) rectangle (1.0,-0.5);%
         \draw[grfaxisstyle] ( 1.0,-1.5) rectangle (2.0,-0.5);%
      \end{grfgraphic}
      \caption{Top: original images. Bottom: images reconstructed with ``optimal'' regularization. Left: Shepp-Logan phantom. Center: mouse head. Right: detail of the center image.}\label{fig:images_other_slices_ideal}
   \end{figure}

   \begin{figure}
      \centering%
      \setlength{\grftotalwidth}{\textwidth}%
      \begin{grfgraphic}{%
         \def\grfxmin{-1.15}\def\grfxmax{2.0}%
         \def\grfymin{-3.5}\def\grfymax{0.5}%
      }%
         \pdfpxdimen=\dimexpr 1in/100\relax%
         \node[anchor=south,rotate=90,inner sep=\grflabelsep] at (-1.0,0.0) {\scriptsize No regularization\strut};%
         \node[anchor=south,rotate=90,inner sep=\grflabelsep] at (-1.0,-1.0) {\scriptsize Minimizer of P-UPBRE${}^{f_{\text{ms}}}_{10^{-1}}$};%
         \node[anchor=south,rotate=90,inner sep=\grflabelsep] at (-1.0,-2.0) {\scriptsize Minimizer of P-UPBRE${}^{f_{\text{kl}}}_{10^{-1}}$};%
         \node[anchor=south,rotate=90,inner sep=\grflabelsep] at (-1.0,-3.0) {\scriptsize Minimizer of P-UPBRE${}^{f_{\text{is}}}_{10^{-1}}$};%
         \node[inner sep=0pt] at (-0.5, 0.0) {\includegraphics[width=\grfxunit]{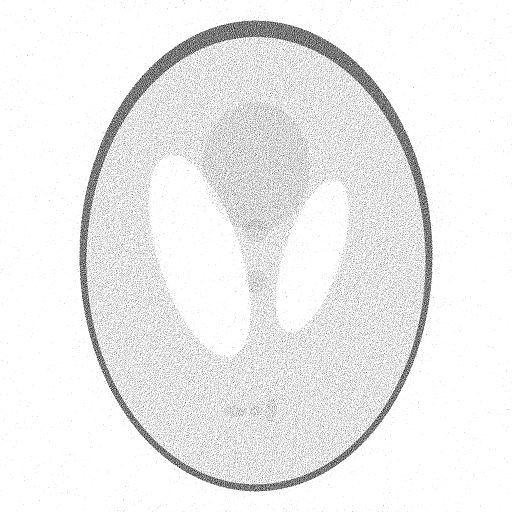}};%
         \node[inner sep=0pt] at ( 0.5, 0.0) {\includegraphics[width=\grfxunit]{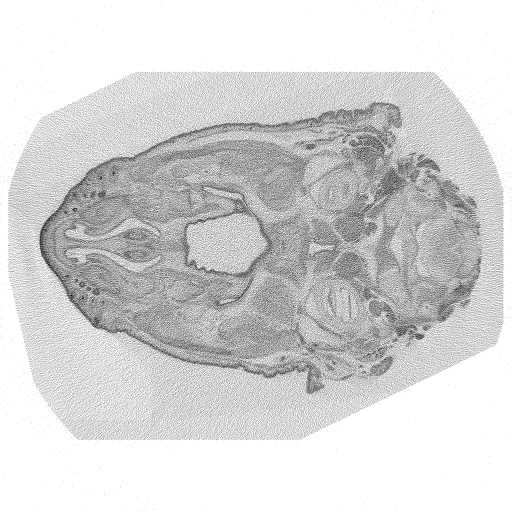}};%
         \draw[line width=0.75pt,opacity=1.0,white] ( 0.5859, 0.0469 ) rectangle ( 0.8359, 0.2969 );%
         \node[inner sep=0pt] at ( 1.5, 0.0) {\includegraphics*[viewport=300px 280px 427px 407px,width=\grfxunit]{results_all_kappa65_rato_noreg}};%
         \node[inner sep=0pt] at (-0.5,-1.0) {\includegraphics[width=\grfxunit]{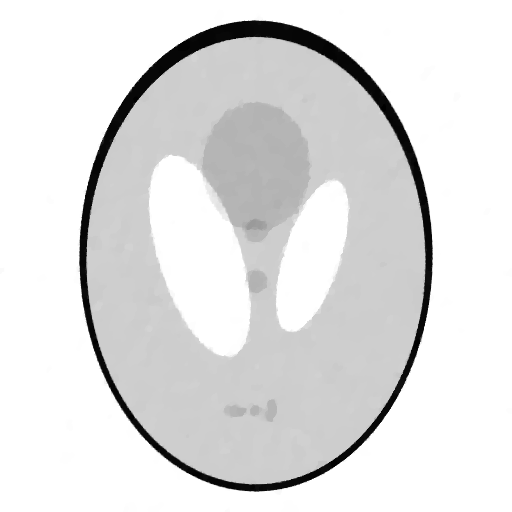}};%
         \node[inner sep=0pt] at ( 0.5,-1.0) {\includegraphics[width=\grfxunit]{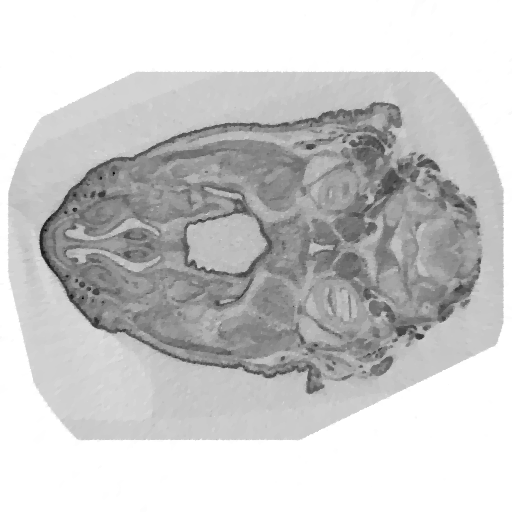}};%
         \draw[line width=0.75pt,opacity=1.0,white] ( 0.5859, -0.9531 ) rectangle ( 0.8359, -0.7031 );%
         \node[inner sep=0pt] at ( 1.5,-1.0) {\includegraphics*[viewport=300px 280px 427px 407px,width=\grfxunit]{results_all_kappa65_rato_sn}};%
         \node[inner sep=0pt] at (-0.5,-2.0) {\includegraphics[width=\grfxunit]{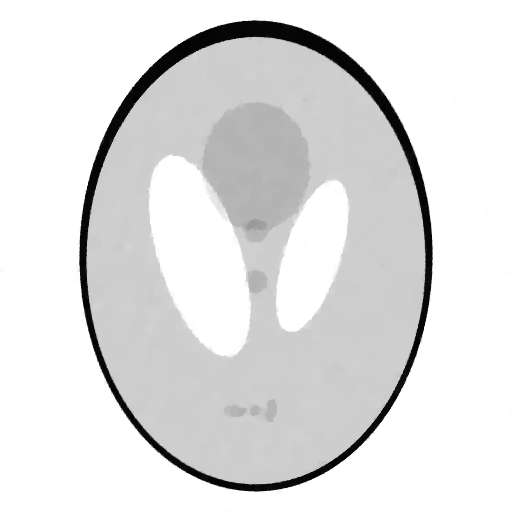}};%
         \node[inner sep=0pt] at ( 0.5,-2.0) {\includegraphics[width=\grfxunit]{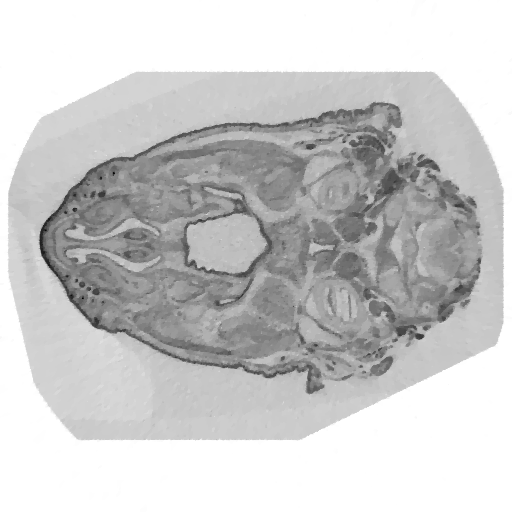}};%
         \draw[line width=0.75pt,opacity=1.0,white] ( 0.5859, -1.9531 ) rectangle ( 0.8359, -1.7031 );%
         \node[inner sep=0pt] at ( 1.5,-2.0) {\includegraphics*[viewport=300px 280px 427px 407px,width=\grfxunit]{results_all_kappa65_rato_kl}};%
         \node[inner sep=0pt] at (-0.5,-3.0) {\includegraphics[width=\grfxunit]{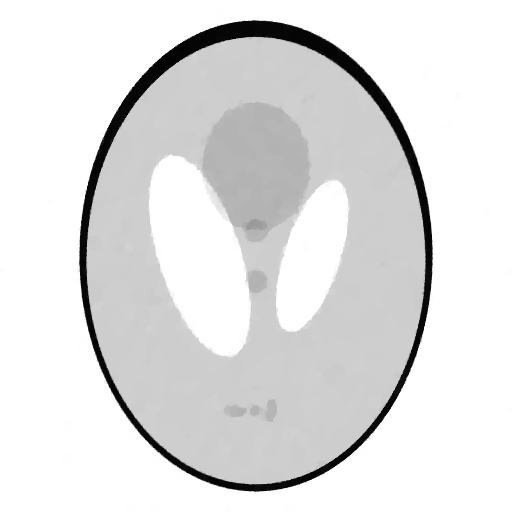}};%
         \node[inner sep=0pt] at ( 0.5,-3.0) {\includegraphics[width=\grfxunit]{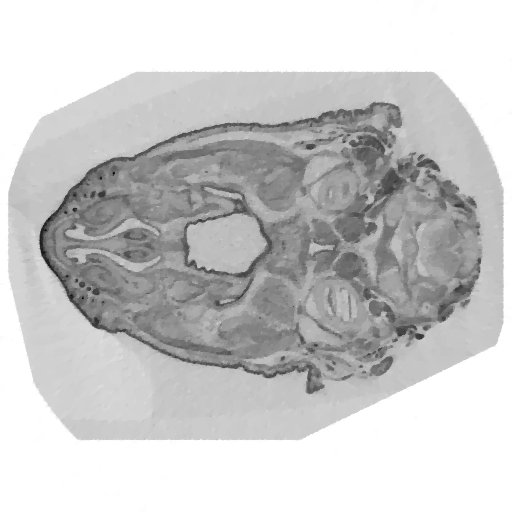}};%
         \draw[line width=0.75pt,opacity=1.0,white] ( 0.5859, -2.9531 ) rectangle ( 0.8359, -2.7031 );%
         \node[inner sep=0pt] at ( 1.5,-3.0) {\includegraphics*[viewport=300px 280px 427px 407px,width=\grfxunit]{results_all_kappa65_rato_is}};%
         \draw[grfaxisstyle] (-1.0,-0.5) rectangle (0.0, 0.5);%
         \draw[grfaxisstyle] ( 0.0,-0.5) rectangle (1.0, 0.5);%
         \draw[grfaxisstyle] ( 1.0,-0.5) rectangle (2.0, 0.5);%
         \draw[grfaxisstyle] (-1.0,-1.5) rectangle (0.0,-0.5);%
         \draw[grfaxisstyle] ( 0.0,-1.5) rectangle (1.0,-0.5);%
         \draw[grfaxisstyle] ( 1.0,-1.5) rectangle (2.0,-0.5);%
         \draw[grfaxisstyle] (-1.0,-2.5) rectangle (0.0,-1.5);%
         \draw[grfaxisstyle] ( 0.0,-2.5) rectangle (1.0,-1.5);%
         \draw[grfaxisstyle] ( 1.0,-2.5) rectangle (2.0,-1.5);%
         \draw[grfaxisstyle] (-1.0,-3.5) rectangle (0.0,-2.5);%
         \draw[grfaxisstyle] ( 0.0,-3.5) rectangle (1.0,-2.5);%
         \draw[grfaxisstyle] ( 1.0,-3.5) rectangle (2.0,-2.5);%
      \end{grfgraphic}
      \caption{From top to bottom: images reconstructed with no regularization; images reconstructed with the regularization parameter set as the numerical minimizer of $\text{P-UPBRE}^{f_{\text{ms}}}_{10^{-1}}( \gamma )$; images reconstructed with the regularization parameter set as the numerical minimizer of $\text{P-UPBRE}^{f_{\text{kl}}}_{10^{-1}}( \gamma )$; images reconstructed with the regularization parameter set as the numerical minimizer of $\text{P-UPBRE}^{f_{\text{is}}}_{10^{-1}}( \gamma )$. Left: Shepp-Logan phantom. Center: mouse head. Right: detail of the center image.}\label{fig:images_other_slices}
   \end{figure}

   \begin{figure}
      \includegraphics*[viewport=0.7in 0in 9.5in 6.5in,width=0.5\textwidth]{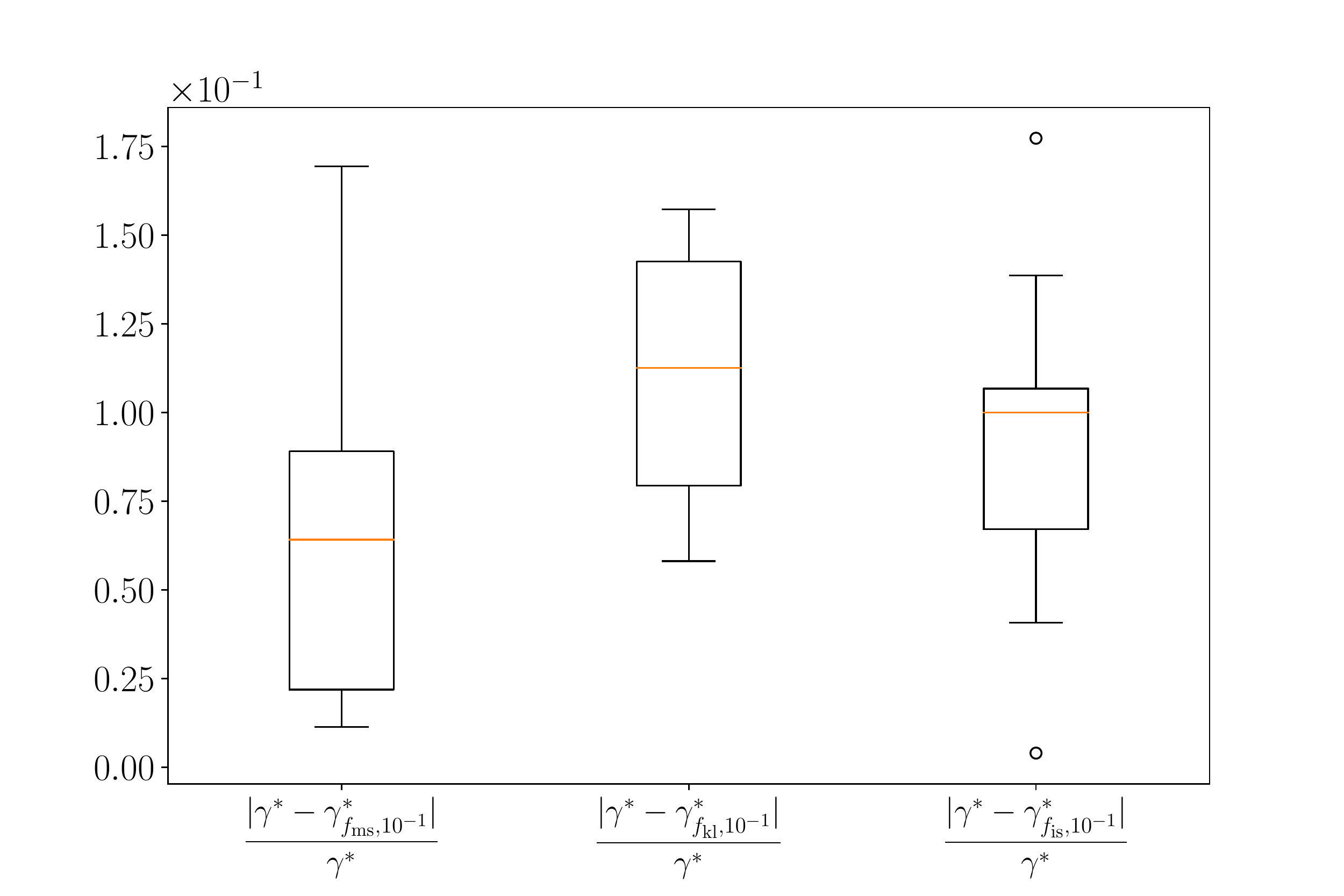}\includegraphics*[viewport=0.7in 0in 9.5in 6.5in,width=0.5\textwidth]{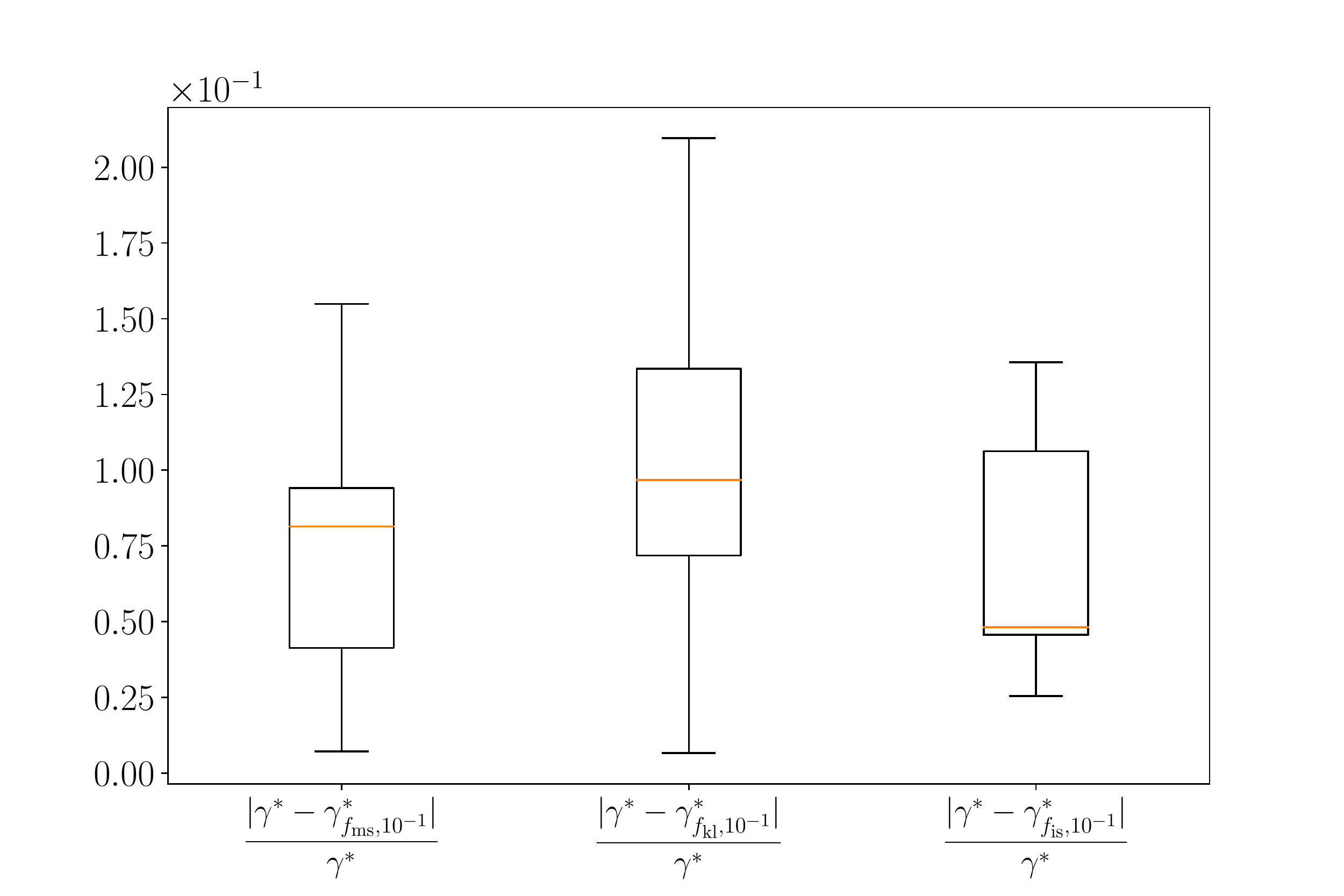}\\%
      \includegraphics*[viewport=0.84in 0in 9.5in 6.5in,width=0.5\textwidth]{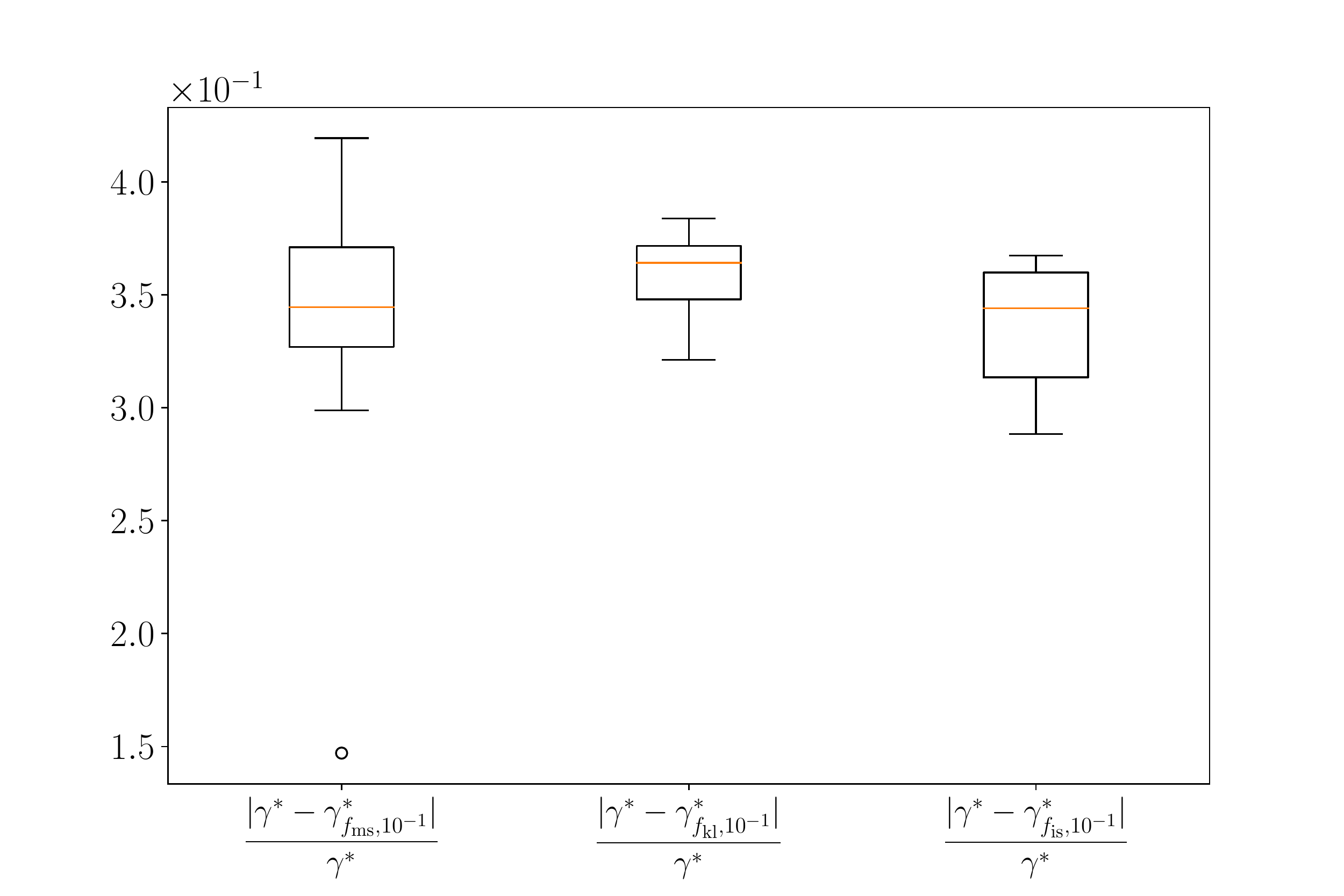}\includegraphics*[viewport=0.84in 0in 9.5in 6.5in,width=0.5\textwidth]{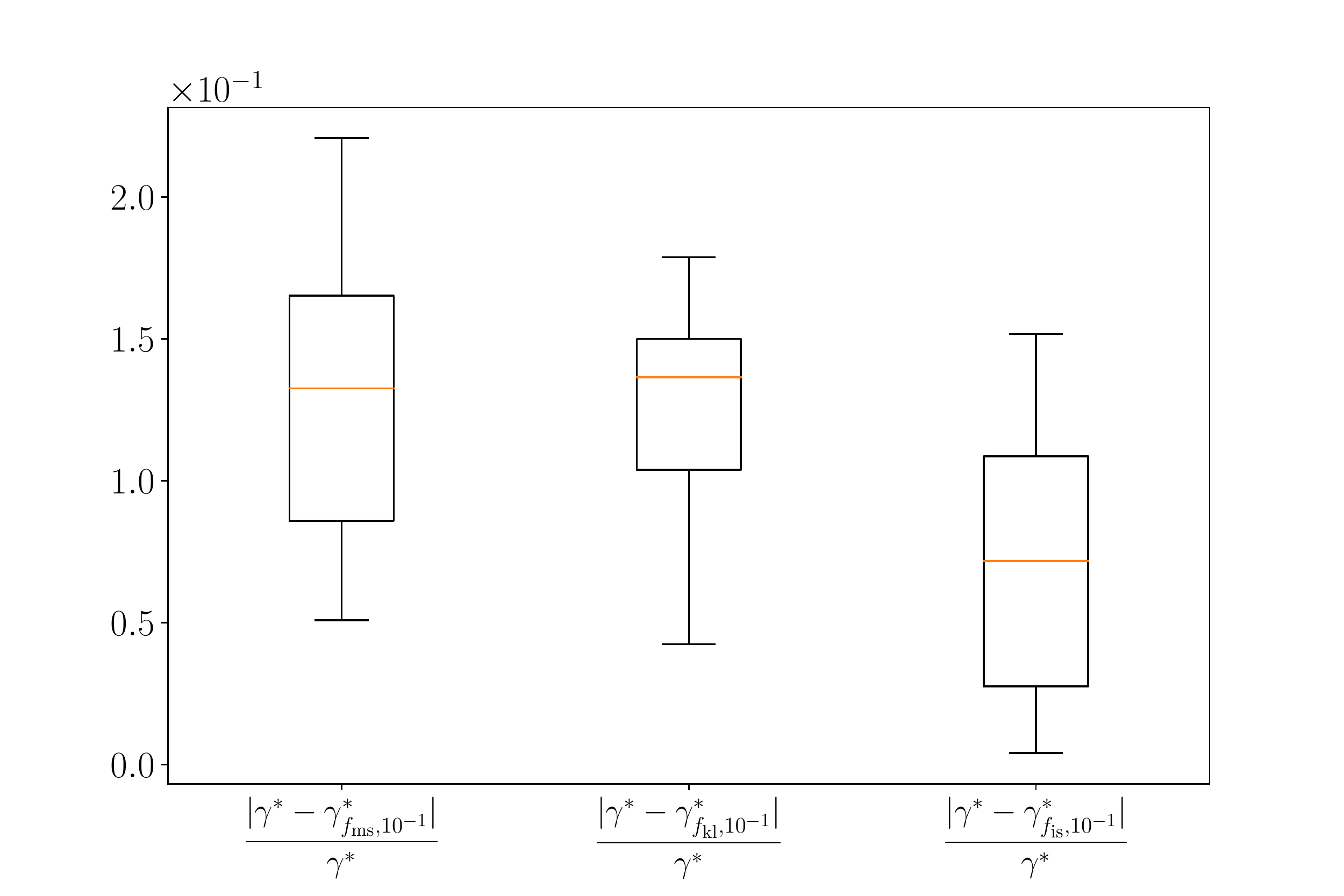}%
      \caption{Boxplots of the relative differences $| \gamma^* - \gamma^*_{f, \epsilon} | / \gamma^*$ between each minimizer $\gamma^*_{f, \epsilon}$ of P-UPBRE${}^f_\epsilon( \gamma )$ and the respective minimizer $\gamma^*$ of the squared error $\| \vect x^\gamma - \vect x^* \|_2^2$. Top left: mouse head with moderately noisy data simulated using the same discretization of the reconstruction (i.e, committing an inverse crime). Top right: mouse head with noisier data simulated using the same discretization than in the reconstruction. Bottom left: mouse head with moderately noisy data simulated using a discretization different from the one used in the reconstruction. Bottom right: Shepp-Logan phantom with more noise than in reconstructions shown in Figure~\ref{fig:box_minims}~and~\ref{fig:box_errors}, simulated using a discretization different from the one used in the reconstruction.}\label{fig:box_minims_outros}
   \end{figure}

   \section{Concentration of Measure}

   Thanks to the efforts of many, from the pioneering insights of Vitali Milman to the refined work of Michel Talagrand, the initial non-asymptotic concentration results of Chernoff and of Hoeffding evolved to the following general idea~\cite{tal96}:
   \begin{quote}
      ``A random variable that depends (in a `smooth' way) on the influence of many independent variables (but not too much on any of them) is essentially constant.''
   \end{quote}
   In order to understand the relevance of this phenomenon to the techniques described in the present paper, in what follows we will consider the consequences of concentrations of two different random variables that fit the description ``depends (in a `smooth' way) on the influence of many independent variables (but not too much on any of them)'' to our methodology.

   First we notice that there is in principle no reason to believe that $\expect_{\vect b} D_f\Bigl( \vect A( \vect x^* ), \vect A\bigl( B_\gamma( \vect b ) \bigr) \Bigr)$ would be close to $D_f\Bigl( \vect A( \vect x^* ), \vect A\bigl( B_\gamma( \vect b ) \bigr) \Bigr)$ for a given $\vect b$. As it is known, of course, the ultimate goal should be to minimize $D_f\Bigl( \vect A( \vect x^* ), \vect A\bigl( B_\gamma( \vect b ) \bigr) \Bigr)$, not $\expect_{\vect b} D_f\Bigl( \vect A( \vect x^* ), \vect A\bigl( B_\gamma( \vect b ) \bigr) \Bigr)$.    However, in many practical applications (such as imaging problems), $D_f\Bigl( \vect A( \vect x^* ), \vect A\bigl( B_\gamma( \vect b ) \bigr) \Bigr)$ is naturally smoothly dependent on several independent random variables (such as millions of data pixels $\vect b$) but not too much on any of them (each data pixel $b_i$ has limited influence in the final result). That is, the concentration of measure principle states that the probability that we have large
   \begin{equation}
      \Bigl| \expect_{\vect b} D_f\Bigl( \vect A( \vect x^* ), \vect A\bigl( B_\gamma( \vect b ) \bigr) \Bigr) - D_f\Bigl( \vect A( \vect x^* ), \vect A\bigl( B_\gamma( \vect b ) \bigr) \Bigr) \Bigr|
   \end{equation}
   is exponentially small.

   In practice, we do not compute $\expect_{\vect b} D_f\Bigl( \vect A( \vect x^* ), \vect A\bigl( B_\gamma( \vect b ) \bigr) \Bigr)$ directly. Instead, we make use of \eqref{eq:g-ureExact} or \eqref{eq:PupbreExact} because the quantities on the right-hand side of these equations are in principle computable without knowledge of the exact solution $\vect x^*$. Indeed, we do not even use $\expect D_f\bigl( \vect b, \vect A( \vect x^\gamma ) \bigr) + \sigma^2\expect\left[ \sum_{i = 1}^m \frac{\partial g_i}{\partial b_i}( \vect b ) \right]$ (we will use the Gaussian case for concreteness, but the discussion applies to the other cases too). Instead, we again reasonably assume that the probability is very small that
   \begin{equation}
      \left| \expect D_f\bigl( \vect b, \vect A( \vect x^\gamma ) \bigr) + \sigma^2\expect\left[ \sum_{i = 1}^m \frac{\partial g_i}{\partial b_i}( \vect b ) \right] - D_f\bigl( \vect b, \vect A( \vect x^\gamma ) \bigr) - \sigma^2\sum_{i = 1}^m \frac{\partial g_i}{\partial b_i}( \vect b ) \right|
   \end{equation}
   is large. Thus, taking \eqref{eq:g-ureExact} into consideration, we should expect that there is only a small probability that the following difference is large
   \begin{equation}
      \left| D_f\Bigl( \vect A( \vect x^* ), \vect A\bigl( B_\gamma( \vect b ) \bigr) \Bigr) - K - D_f\bigl( \vect b, \vect A( \vect x^\gamma ) \bigr) - \sigma^2\sum_{i = 1}^m \frac{\partial g_i}{\partial b_i}( \vect b ) \right|.
   \end{equation}

   \def\Exp{\expect}

   This is an admittedly vague affirmation. Indeed, we do not have the goal to provide concrete concentration inequalities here. Instead, our analysis will start from the following question: assuming that some function $\varphi : \mathbb R^m \times \mathbb R \to \mathbb R$ somehow concentrates around $\expect_{\vect b} \varphi( \vect b, \gamma )$, does the minimizer ${\overline\gamma}^*$ of $\varphi( \vect b, \gamma )$ concentrate around the minimizer $\gamma^*$ of $\expect_{\vect b} \varphi( \vect b, \gamma )$? In order to further simplify the analysis we will assume a discrete parameter space, leaving the continuous case for future research.

   Since we will be mainly concerned with $\varphi(\vect b, \gamma)$ as a function of $\gamma$, we denote $\varphi_{\vect b}(\gamma) := \varphi(\vect b, \gamma)$ from now on. We will assume that, for each $\vect b$, the function $\varphi_{\vect b}$ is locally Lipschitz continuous on the variable $\gamma$. Moreover, assume that there exists $\lambda > 0$ such that the function $\Exp\varphi_{\vect b}: \mathbb{R}_+\to \mathbb{R}$ satisfies:
   \begin{equation}
      \Exp\varphi_{\vect b}(\gamma) \geq \Exp\varphi_{\vect b}(\gamma^*) + \frac{\lambda}{2} \| \gamma - \gamma^* \|_2^2\text{, \quad for all }\gamma \geq 0\text{.}
   \end{equation}

   A sufficient condition for the above inequality to hold is strong convexity of $\Exp\varphi_{\vect b}( \gamma )$ with \elias{respect}{relation} to $\gamma$, but this is not \elias{necessary}{neccessary}, what is really required is that the function $\Phi( \gamma ) := \expect\varphi_{\vect b}( \gamma )$ is not too ``flat'' close to its (unique) minimizer $\gamma^*$. The following form of this inequality will be more frequently used below:
   \begin{equation}\label{eq:1}
      \Exp\varphi_{\vect b}(\gamma^*) - \Exp\varphi_{\vect b}(\gamma) \leq  - \frac{\lambda}{2} \| \gamma - \gamma^* \|_2^2\text{, \quad for all }\gamma \geq 0\text{.}
   \end{equation}

   We will assume that $\varphi_{\vect b}( \gamma )$ satisfies, uniformly over all possible values of $\gamma$, the following concentration inequalities for small enough $t > 0$:
   \begin{equation}\label{eq:concentration_upper}
      \prob[ \varphi_{\vect b}( \gamma ) > \expect \varphi_{\vect b}( \gamma ) + t ] \leq \exp\left( -\frac{t^2}{4V} \right)
   \end{equation}
   and
   \begin{equation}\label{eq:concentration_lower}
      \prob[ \varphi_{\vect b}( \gamma ) < \expect \varphi_{\vect b}( \gamma ) - t ] \leq \exp\left( -\frac{t^2}{4V} \right)
   \end{equation}
   for some $V > 0$. For conditions that might ensure the validity of such inequalities, see, e.g., \cite[Theorem~8.2]{dup09}.

   \def\Prob{\prob}

   Let us then compute a uniform bound on the probability that $\varphi_{\vect b}(\gamma)$ is larger than $\Exp\varphi_{\vect b}(\gamma^*)$ by  $c > 0$ units:
   \begin{equation}
      \begin{split}
         \Prob\left[ \varphi_{\vect b}(\gamma) \geq \Exp\varphi_{\vect b}(\gamma^*) + c \right] &{} =
         \Prob\left[ \varphi_{\vect b}(\gamma) - \varphi_{\vect b}(\gamma^*) + \varphi_{\vect b}(\gamma^*) \geq \Exp\varphi_{\vect b}(\gamma^*) + c \right] \\
         &{} = \Prob\left[ \varphi_{\vect b}(\gamma^*) \geq \Exp\varphi_{\vect b}(\gamma^*) + \varphi_{\vect b}(\gamma^*) - \varphi_{\vect b}(\gamma) + c \right] \\
         &{} \leq \Prob\left[ \varphi_{\vect b}(\gamma^*) \geq \Exp\varphi_{\vect b}(\gamma^*) - L\|\gamma - \gamma^*\| + c \right]\text,
      \end{split}
   \end{equation}
   where the last inequality comes from  the Lipschitz property of $\varphi_{\vect b}$. If $c > L\|\gamma - \gamma^*\|$, then we can use \eqref{eq:concentration_upper} and obtain
   \begin{equation}
      \Prob\left[ \varphi_{\vect b}(\gamma) \geq \Exp\varphi_{\vect b}(\gamma^*) + c \right] \leq \exp\left( - \frac{(c - L\|\gamma - \gamma^*\|)^2}{4V} \right)\text{.}
   \end{equation}

   Now, let us compute a bound for the probability that $\varphi_{\vect b}(\gamma)$ is smaller than $\Exp\varphi_{\vect b}(\gamma^*)$ by a difference of $0 < c < \lambda\|\gamma - \gamma^*\|_2^2/2$:

   \begin{equation}
      \begin{split}
         \Prob\left[ \varphi_{\vect b}(\gamma) \leq \Exp\varphi_{\vect b}(\gamma^*) + c \right] &{} =
         \Prob\left[ \varphi_{\vect b}(\gamma) \leq \Exp\varphi_{\vect b}(\gamma) + \Exp\varphi_{\vect b}(\gamma^*) - \Exp\varphi_{\vect b}(\gamma) + c \right] \\
         &{} \overset{\eqref{eq:1}}{\leq}  \Prob\left[ \varphi_{\vect b}(\gamma) \leq \Exp\varphi_{\vect b}(\gamma)  - \frac{\lambda}{2} \| \gamma - \gamma^* \|_2^2 + c \right] \\
         &{} \overset{\eqref{eq:concentration_lower}}{\leq} \exp\left( - \frac{(\frac{\lambda}{2} \| \gamma - \gamma^* \|_2^2 - c)^2}{4V} \right)\text.
      \end{split}
   \end{equation}

   So, suppose that we have a closed interval $[\gamma_0, \gamma_\ell]\subset \mathbb{R}_+$ such that $\gamma^* \in [\gamma_0, \gamma_\ell]$, and that $\gamma_0 < \gamma_1 < \hdots < \gamma_\ell$ produces a partition to $[\gamma_0, \gamma_\ell]$. Also, for some $d > 0$, consider the following two sets:
   \begin{equation}
      \Gamma := \{ \gamma \geq 0 : \| \gamma - \gamma^* \| \leq d \} \text{ and } \tilde\Gamma := \{ \gamma \geq 0 : \| \gamma - \gamma^* \| > d \}\text.
   \end{equation}
   Then, let us define the following events for any given $Ld < c < \lambda d^2/2$:
   \begin{itemize}
      \item{$A$: there is at least one  element in $\{\gamma_0, \gamma_1, \hdots, \gamma_\ell\}$ such that $\gamma_j \in \Gamma$;}
      \item{$B$: there is at least one element $\gamma_j \in \{\gamma_0, \gamma_1, \hdots, \gamma_\ell\}$ such that it belongs to $\Gamma$, and additionally, $\varphi_{\vect b}(\gamma_j) < \Exp\varphi_{\vect b}(\gamma^*) + c$; }
      \item{$C$: all elements $\gamma_i \in \{\gamma_0, \gamma_1, \hdots, \gamma_\ell\}$ that belong to $\tilde\Gamma$ satisfy $\varphi_{\vect b}(\gamma_i) > \Exp\varphi_{\vect b}(\gamma^*) + c$. }
   \end{itemize}
   Hence, letting
   \begin{equation}
      \hat\gamma\in \argmin_{\gamma\in \{\gamma_0, \hdots, \gamma_\ell\}} \varphi_{\vect b}(\gamma)\text,
   \end{equation}
   we have
   \begin{equation}
      \begin{split}
         \Prob\left[ \hat\gamma \in \Gamma | A \right] &{} \geq \Prob\left[ B \cap C | A \right] \\
         &{} = 1 - \Prob\left[ \neg B \cup \neg C | A \right] \\
         &{} \geq 1 - \Prob\left[ \neg B | A \right] - \Prob\left[ \neg C | A \right] \\
         &{} \geq 1 - \exp\left( - \frac{(c - Ld)^2}{4V} \right) - \exp\left( - \frac{(\frac{\lambda}{2} d^2 - c)^2}{4V} \right)\text.
      \end{split}
   \end{equation}
   In case that $d > 0$ is large enough for $Ld < (\lambda/4) d^2 < \lambda d^2/2$ to hold, then we can take $c = (\lambda/4) d^2$, and find a lower bound for $\Prob\left[ \hat\gamma \in \Gamma | A \right]$ in terms of $d > 0$:
   $$ \Prob\left[ \hat\gamma \in \Gamma | A \right] \geq 1 - \exp\left( - \frac{(\frac{\lambda}{4}d^2 - Ld)^2}{4V} \right) - \exp\left( - \frac{(\frac{\lambda}{4} d^2)^2}{4V} \right)\text{.}$$

\elias{
With the aim of providing an illustration for these ideas, consider the function 
$\varphi(\vect b, \gamma) : \mathbb{R}^2 \times \mathbb{R}\rightarrow \mathbb{R}$ stated as
$ \varphi(\vect b, \gamma) = (b_1 +1)^2 \gamma^2 - (b_2+1)^2 \gamma + \frac{b_1^2}{2}+ 2 b_2^2$, 
in which $ b_j \in \mathcal{N}(0, \sigma^2), \ j = 1, 2$.

By taking the standard deviations $\sigma \in \{0.1, 0.2, 0.3\}$ and randomly generating samples of 200 points in each case, 
the concentration of measure phenomenon may be observed in the plots of Figure~\ref{fig:graphs}. The sets of
  minimizers of $\varphi_{\vect b} (\gamma) := \varphi(\vect b, \gamma) $, given by  
$ \bar{\gamma}^* = \frac{(b_2+1)^2}{2(b_1 +1)^2}$, are displayed in Figure~\ref{fig:distrib} for 
each sampling. Notice that, since the average vector of coefficients is $\bar{\vect{b}} = (0,0)$, we obtain 
$\varphi_{\bar{\vect b}} (\gamma) = \gamma^2 - \gamma$, whose 
minimizer is 0.5. The minimizers of $\mathbb{E}\varphi_{\vect b}$ for each choice of the deviation~$\sigma$, namely 
 0.499592,   0.503212, and 0.610599, are closer to the ideal value of 0.5
 than the average of the minimizers of $\varphi_{\vect b} (\gamma) $, which are
0.519001,  0.65556, and 2.34117. We should stress that although the ideal value is 0.5, our interest here is to 
compare how close a minimizer of a single realization of $\varphi_{\vect b}$ is from the minimizer of 
$\mathbb{E}\varphi_{\vect b}$, since this is the subject of concern in this section.

For such an example, the constant of strong conexity may be set as $\lambda=2$. Denoting by~$r$ the 
radius of the neighborhood   to establish the local Lipschitz constant for $\varphi_{\vect b} (\gamma) $ 
around~$\gamma^*$, we reach $L=(1+2r)(1+\sigma)^2$. Setting $r=0.1$, the three choices for
 $\sigma$ yield $L \in \{1.452, 1.728, 2.028 \}$. Defining 
 $V=\sigma^2$ and $d = 9L/(2 \lambda)$, the desired relationships $Ld < (\lambda/4) d^2 < \lambda d^2/2$ hold. Thus,
the lower bounds we have computed for 
 $\mathbb{P}[\hat{\gamma} \in \Gamma | A]$ are respectively given by 0.999848, 0.987820, and 0.975685.

\begin{figure}[h]
\begin{tabular}{ccc}
\includegraphics[width=0.3\textwidth]{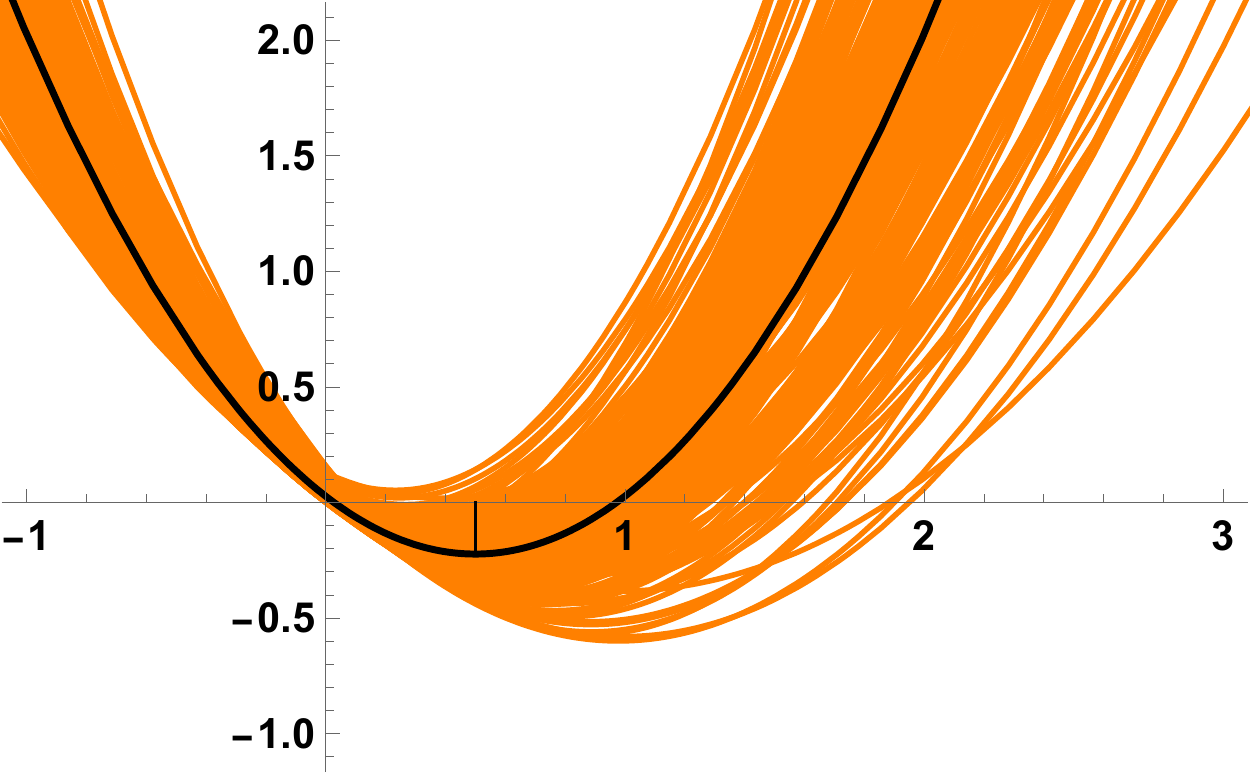}&
\includegraphics[width=0.3\textwidth]{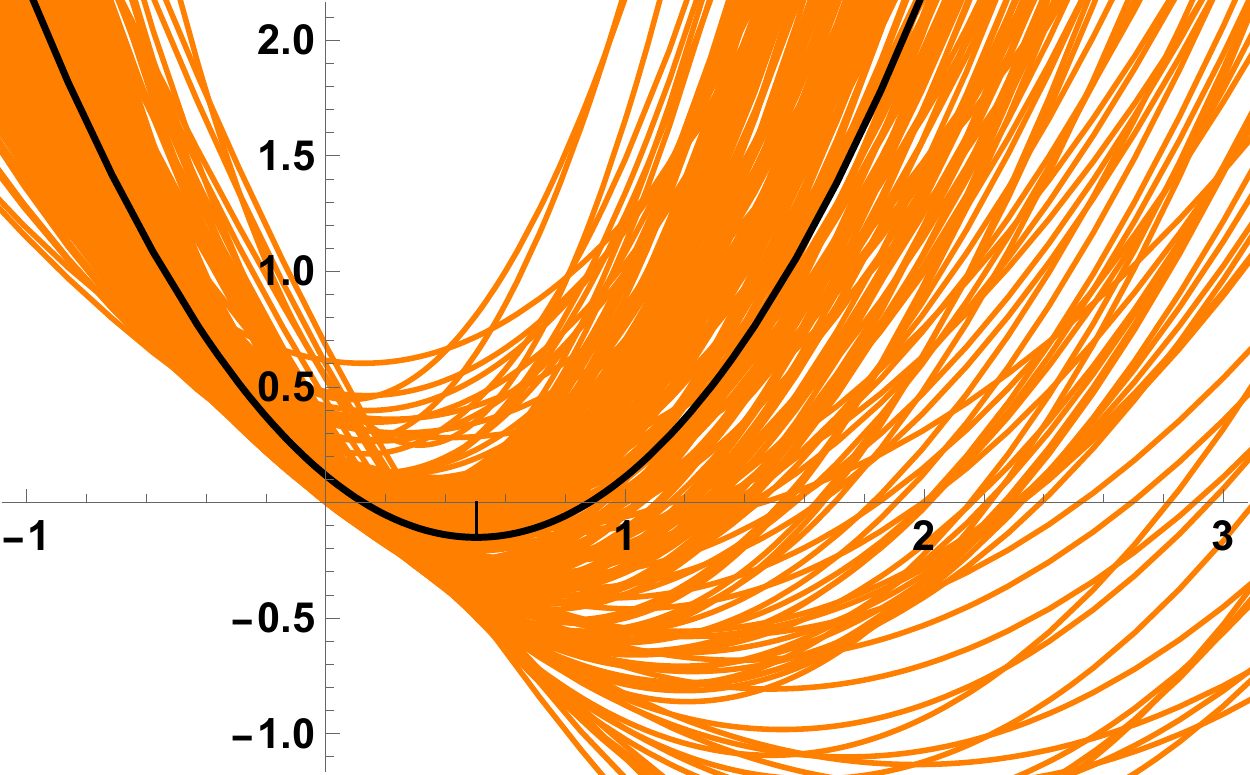}&
\includegraphics[width=0.3\textwidth]{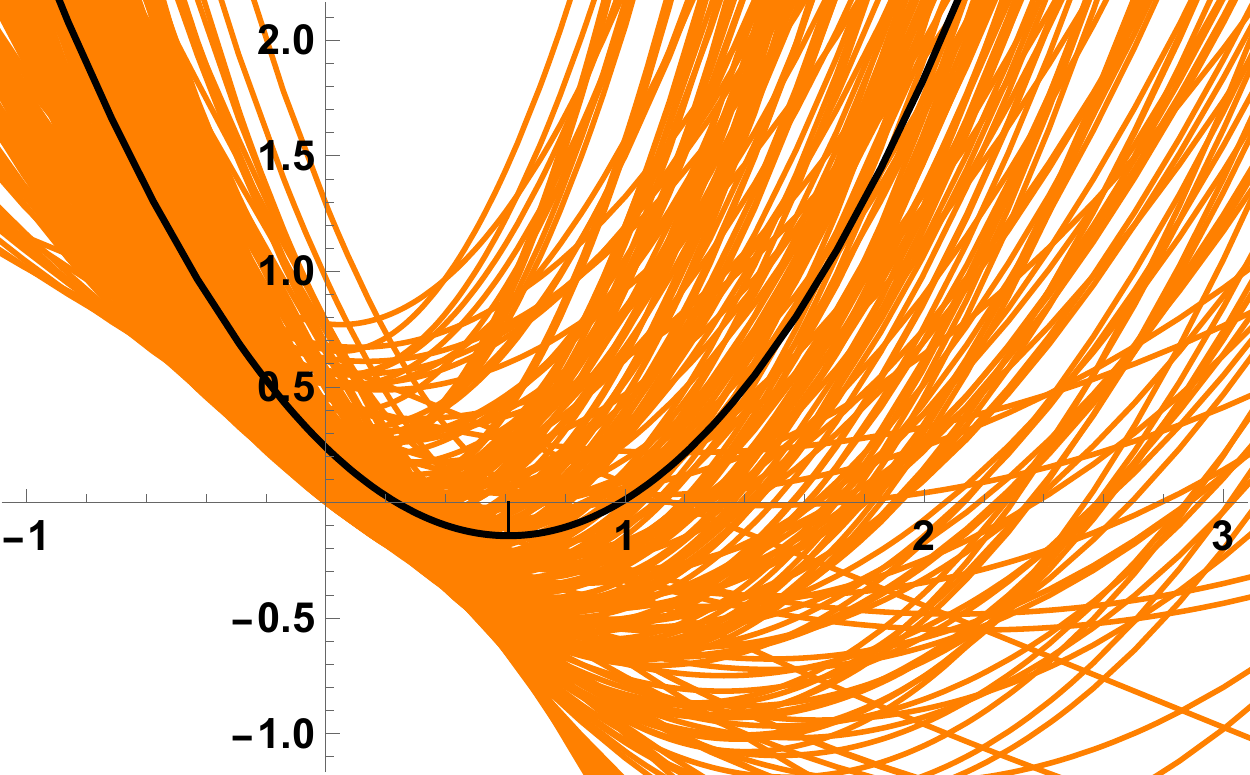}\\
$\sigma = 0.1$ &  $\sigma = 0.2$ & $\sigma = 0.3$
\end{tabular}
\caption{Graphs of the sampled functions $\varphi_{\vect b} (\gamma) $, together with the curve 
$y=\mathbb{E}\varphi_{\vect b}(\gamma)$.}
\label{fig:graphs}
\end{figure}

\begin{figure}[h]
\begin{tabular}{ccc}
\includegraphics[width=0.3\textwidth]{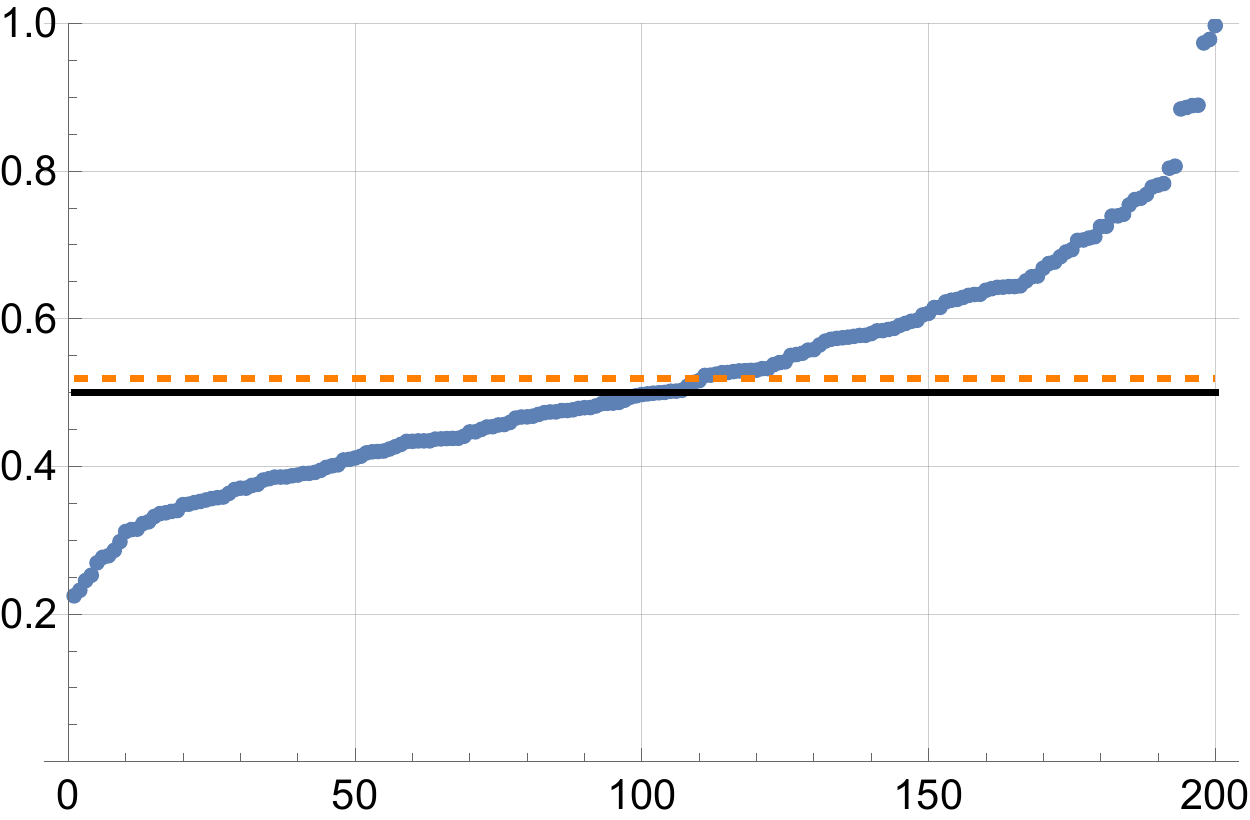}&
\includegraphics[width=0.3\textwidth]{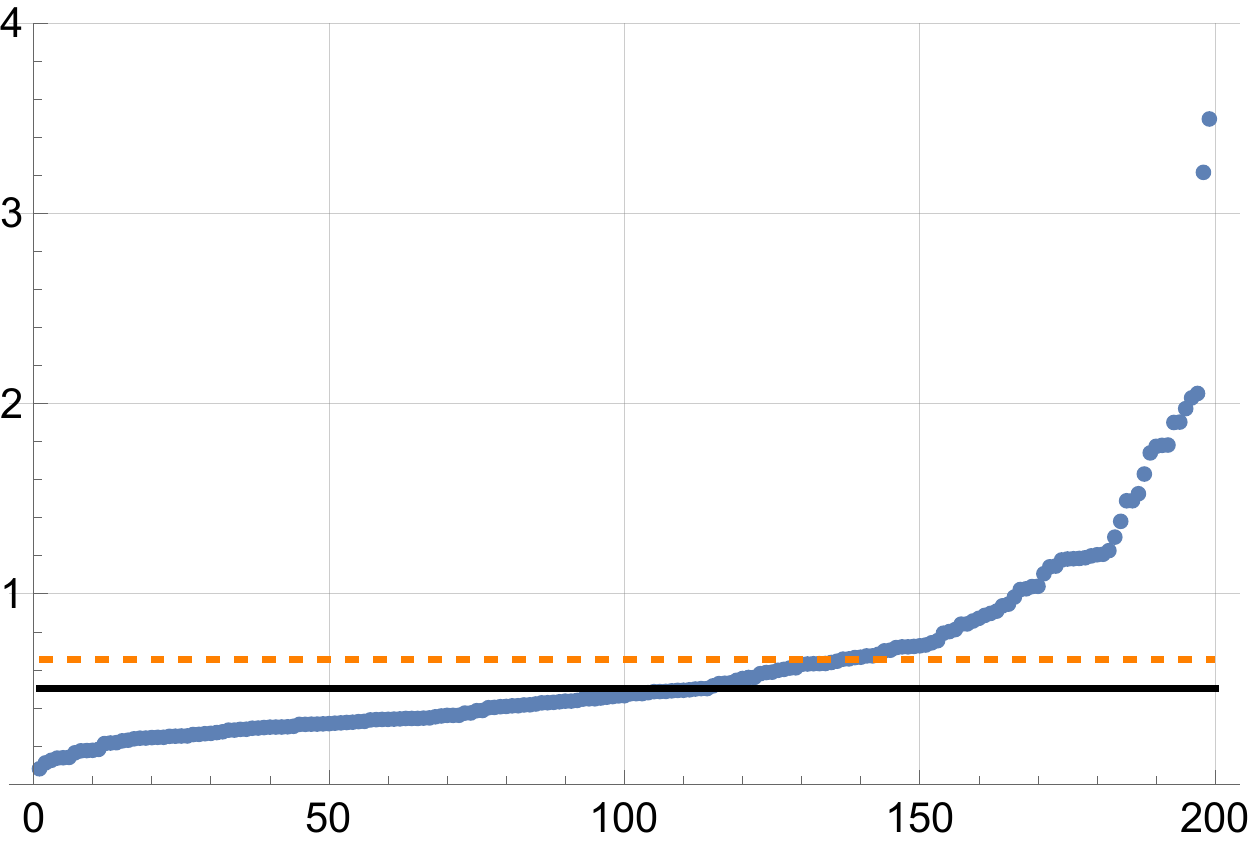}&
\includegraphics[width=0.3\textwidth]{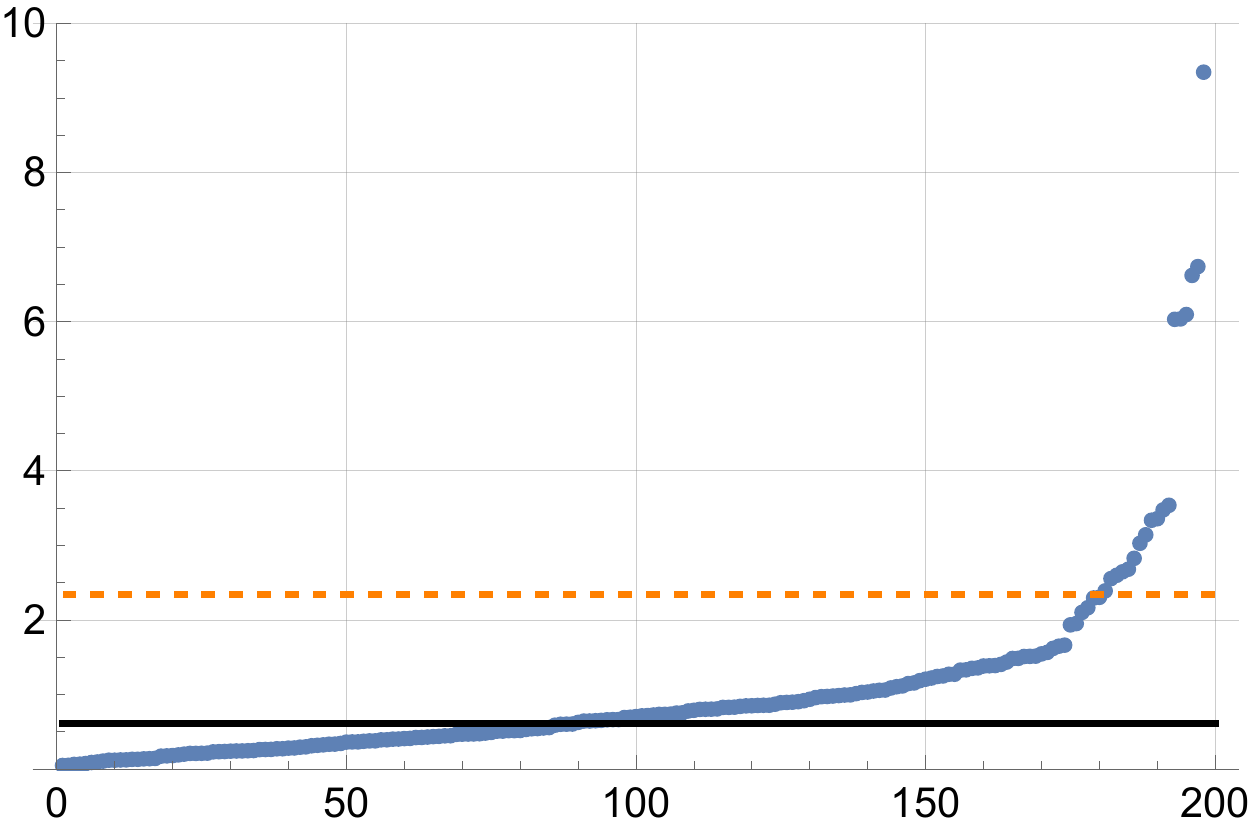}\\
$\sigma = 0.1$ &  $\sigma = 0.2$ & $\sigma = 0.3$
\end{tabular}
\caption{Sorted distribution of minimizers of $\varphi_{\vect b} (\gamma) $ with its average depicted in the dashed line. 
The minimizer of $\mathbb{E}\varphi_{\vect b}$ corresponds to the continuous horizontal line. Notice that the vertical ranges are distinct.}
\label{fig:distrib}
\end{figure}
}{}

   \section{Conclusions}

   We have introduced a generalization of the unbiased predictive risk estimator which allows the use of more general Bregman divergences than the squared norm of the difference. The minimization of these estimators \elias{leads}{lead} to a regularization parameter selection method for inverse problems, which we have applied to the image reconstruction problem in computed tomography. Simulated and real-world experiments corroborate the intuition that the flexibility to select the most appropriate Bregman divergence for the problem in hand might be useful. Finally, we have analysed what the consequences of a concentration inequality in the estimator would be for its minimizer and we concluded that if the estimator does indeed concentrate around its expected value, than so does the minimizer of this estimator.

   \section*{Acknowledgments}

   This work was partially funded by \emph{Conselho Nacional de Desenvolvimento Científico e Tecnológico} (CNPq) grants 310893/2019-4 and 305010/2020-4 and \emph{Fundação de Amparo à Pesquisa do Estado de São Paulo} (FAPESP) grants 2018/24293-0, 2016/22989-2 and 2013/07375-0.

   We are also indebted to Prof. Juliana Cobre for reading and commenting on an early version of the manuscript\elias{, and to the anonymous referees for pointing rooms for improving the presentation of our work.}{.}

   \bibliographystyle{paper}
   \bibliography{2020_ip.bib}

\end{document}